\documentclass{article}

\usepackage{arxiv}

\usepackage[utf8]{inputenc} 
\usepackage[T1]{fontenc}    
\usepackage[hypertexnames=true]{hyperref}       
\usepackage{url}            
\usepackage{booktabs}       
\usepackage{amsmath, amsfonts}       
\usepackage{amsthm}
\usepackage{nicefrac}       
\usepackage{microtype}      
\usepackage{cleveref}       
\usepackage{graphicx}
\usepackage{doi}

\usepackage{amssymb}
\usepackage{algorithmic}
\usepackage[ruled,linesnumbered,vlined]{algorithm2e}
\usepackage{array}
\usepackage[caption=false,font=normalsize,labelfont=sf,textfont=sf]{subfig}
\usepackage{textcomp}
\usepackage{stfloats}
\usepackage{verbatim}
\usepackage{cite}
\usepackage{multirow} 
\usepackage{scalerel}
\usepackage{tikz}
\usepackage[title,titletoc]{appendix}

\DeclareMathOperator*{\argmin}{\arg\min}

\theoremstyle{plain}

\newtheorem{lemma}{Lemma}
\newtheorem{proposition}{Proposition}
\theoremstyle{definition}

\newtheorem{definition}{Definition}
\theoremstyle{remark}
\newtheorem{remark}{Remark}

\title{Improved Front Steepest Descent \\ for Multi-objective Optimization}


\author{ \href{https://orcid.org/0000-0002-2488-5486}{\includegraphics[scale=0.06]{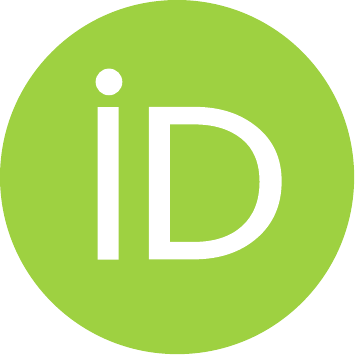}\hspace{1mm}Matteo Lapucci}\\
	Global Optimization Laboratory (GOL) \\
	Department of Information Engineering \\
	University of Florence \\
	Via di Santa Marta, 3, 50139, Florence, Italy \\
	\texttt{matteo.lapucci@unifi.it} \\
	\And
	\href{https://orcid.org/0000-0002-1394-0937}{\includegraphics[scale=0.06]{orcid.pdf}\hspace{1mm}Pierluigi Mansueto} \\
	Global Optimization Laboratory (GOL) \\
	Department of Information Engineering \\
	University of Florence \\
	Via di Santa Marta, 3, 50139, Florence, Italy \\
	\texttt{pierluigi.mansueto@unifi.it} \\
}


\hypersetup{
pdftitle={Improved Front Steepest Descent for Multi-objective Optimization},
pdfsubject={},
pdfauthor={Matteo Lapucci, Pierluigi Mansueto},
pdfkeywords={Multi-objective optimization, Steepest descent, Pareto front},
}

\begin{document}
\maketitle

\begin{abstract}
	In this paper, we deal with the Front Steepest Descent algorithm for multi-objective optimization. We point out that the algorithm from the literature is often incapable, by design, of spanning large portions of the Pareto front. We thus introduce some modifications within the algorithm aimed to overcome this significant limitation. We prove that the asymptotic convergence properties of the algorithm are preserved and numerically show that the proposed method significantly outperforms the original one.
\end{abstract}

\keywords{Multi-objective optimization \and Steepest descent \and Pareto front}

\MSCs{90C29 \and 90C30}

\section{Introduction} 
\label{sec:intro}
In this paper, we are interested in optimization problems of the form 
\begin{equation}
	\label{eq:mo_prob}
	\min_{x\in\mathbb{R}^n}\; F(x) = (f_1(x),\ldots,f_m(x))^T,
\end{equation} where $F:\mathbb{R}^n\to \mathbb{R}^m$ is a vector-valued continuously differentiable function. We are thus dealing with \textit{smooth, unconstrained multi-objective optimization} problems, where many functions have to be simultaneously minimized and Pareto's efficiency concepts have to be considered to establish optimality. We refer the reader to \cite{eichfelder2021twenty} for an introduction to multi-objective optimization.

\textit{Multi-objective descent methods} \cite{fliege2009newton,fliege2000steepest,gonccalves2020globally,Tanabe2019} constitute a class of algorithmic approaches designed to tackle these problems; these approaches basically extend classical iterative optimization algorithms for scalar optimization to the multi-objective setting. Descent methods are receiving increasing attention and have consistently become significant alternatives to scalarization methods \cite{drummond2008choice,eichfelder2009adaptive,pascoletti1984scalarizing} and evolutionary algorithms \cite{deb2002fast}. This is particularly true for recent versions of descent approaches that are specifically designed to handle sets of points and to construct an approximation of the entire \textit{Pareto front}, rather than a single solution.

In this short manuscript, we focus on the \textit{Front Steepest Descent} (\texttt{FSD}) algorithm proposed in \cite{cocchi2020convergence}. In particular, we argue that, although being far superior than the original single point steepest descent algorithm for multi-objective optimization \cite{fliege2000steepest}, \texttt{FSD} as defined in \cite{cocchi2020convergence} has limited exploration capabilities and it is quite frequently unable to span large portions of the Pareto front.

We thus propose small but crucial modifications to the algorithm, that allow to turn it tremendously effective at spanning the entire Pareto front, regardless of the starting set of points. We show that the proposed approach still enjoys the nice convergence guarantees of the original \texttt{FSD}. 

The rest of the paper is organized as follows: in Section \ref{sec:steepest}, we summarize the \texttt{FSD} algorithm, recalling its convergence properties; we then point out in Section \ref{sec:no_span} that in certain, common situations the algorithm is unable to span large portions of the Pareto front. In Section \ref{sec:improved_fsd} we introduce the novel strategy for generating nondominated solutions within \texttt{FSD} and we provide the convergence analysis for the resulting algorithm in Section \ref{sec:convergence}. In Section \ref{sec:exp}, we present the results of numerical experiments showing that the proposed modification significantly improves effectiveness and consistency of the \texttt{FSD} algorithm. We finally give some concluding remarks in Section \ref{sec:conclusions}.

\section{The Front Steepest Descent algorithm}
\label{sec:steepest}

The Front Steepest Descent algorithm \cite{cocchi2020convergence} was designed to solve problem \eqref{eq:mo_prob} according to Pareto's optimality concepts. Given the standard partial ordering in $\mathbb{R}^m$, i.e.,
\begin{gather*}
	u\le v \iff u_j\le v_j, \;\forall\,j=1,\ldots,m,\\
	u< v \iff u_j< v_j, \;\forall\,j=1,\ldots,m,\\
	u \lneqq v\iff u\le v \land u\neq v,
\end{gather*}
the aim is to find solutions $\bar{x}\in\mathbb{R}^n$ that satisfy the following properties, listed in decreasing order of strength:
\begin{itemize}
	\item \textit{Pareto optimality:} $\nexists\,y\in\mathbb{R}^n$ s.t. $F(y)\lneqq F(\bar{x})$; 
	\item \textit{Weak Pareto optimality:} $\nexists\,y\in\mathbb{R}^n$ s.t. $F(y)< F(\bar{x})$;  
	\item \textit{Pareto stationarity:} $\min\limits_{d\in\mathbb{R}^n}\max\limits_{j=1,\ldots,m}\nabla f_j(\bar{x})^Td = 0$. 
\end{itemize}
In fact, there typically exist many Pareto optimal solutions (the \textit{Pareto set}) that account for different trade-offs between the contrasting objectives; these trade-offs, that constitute in the objectives space the \textit{Pareto front}, can a posteriori be evaluated by the decision makers, who are thus willing to have the broadest possible range of available options.  

\texttt{FSD} method specifically aims to construct an approximation of the entire Pareto front; the algorithm works in an iterative fashion, maintaining at each iteration a set $X^k$ of solutions that are \textit{mutually nondominated}, i.e., for any $x\in X^k$ there is no $y\in X^k$ such that $F(y)\lneqq F(x)$.

The points for the set $X^{k+1}$ are computed carrying out search steps starting from the points $\hat{x} \in X^k$ along:
\begin{itemize}
	\item the \textit{steepest common descent direction} \cite{fliege2000steepest}: 
	\begin{equation}
		\label{eq::steepest_common}
		v(\hat{x}) = \argmin\limits_{d\in\mathbb{R}^n}\max\limits_{j=1,\ldots,m} \nabla f_j(\hat{x})^Td + \frac{1}{2}\|d\|^2;
	\end{equation}
	\item the \textit{steepest partial descent directions} \cite{cocchi2021pareto,cocchi2020convergence}: given $I\subset \{1,\ldots,m\}$, 
	\begin{equation}
		\label{eq::steepest_partial}
		v^I(\hat{x}) = \argmin\limits_{d\in\mathbb{R}^n}\;\max\limits_{j\in I}\;\nabla f_j(\hat{x})^Td + \frac{1}{2}\|d\|^2.
	\end{equation}
\end{itemize}
The use of equality notation in the definition of steepest descent directions is justified by the uniqueness of the solution set for the above optimization problems (the objective is strongly convex and continuous). Given any subset of objectives $I$, a partial descent direction exists if $$\theta^I(\hat{x}) = \min\limits_{d\in\mathbb{R}^n}\;\max\limits_{j\in I}\;\nabla f_j(\hat{x})^Td + \frac{1}{2}\|d\|^2<0;$$  of course, the steepest common descent direction $v(\hat{x})$ and the corresponding $\theta\left(\hat{x}\right)$ are considered when $I=\{1,\ldots,m\}$. Both mappings $v^I(\hat{x})$ and $\theta^I(\hat{x})$ are continuous \cite{fliege2000steepest}.

The instructions of the \texttt{FSD} procedure are summarized in Algorithm \ref{alg::FSD}. In brief, at each iteration $k$, all points in the current set of nondominated solutions, $X^k$, are considered; for each one of these points, $x_c$, a line search along the steepest partial descent direction is carried out for any subset of objectives $I\subseteq\{1,\ldots,m\}$ such that $\theta^I(x_c)<0;$ in addition, a subset $I$ is only considered for $x_c$ if the point is nondominated with respect to that subset of objectives. 

\SetInd{1ex}{1ex}
\begin{algorithm}[h]
	\caption{\texttt{FrontSteepestDescent}} \label{alg::FSD}
	Input: $F:\mathbb{R}^n \rightarrow \mathbb{R}^m$, $X^0$ set of mutually nondominated points w.r.t.\ $F$. \\
	$k = 0$\\
	\While{a stopping criterion is not satisfied}{
		$\hat{X}^k = X^k$ \\
		\ForAll{$x_c\in X^k$}{
			\ForAll{$I\subseteq\{1,\ldots,m\}$ such that \label{step:forall} \begin{itemize}
					\item $\nexists y\in \hat{X}^k \text{ s.t. } F_I(y)\lneqq F_I(x_c)$ and
					\item $\theta^I(x_c) < 0$
			\end{itemize}}{
				$\alpha$ = \texttt{ArmijoLS}($F(\cdot), I, \hat{X}^k, x_c, v^I(x_c), \theta^I(x_c)$) \\
				$\hat{X}^k = \hat{X}^k \setminus \{y \in \hat{X}^k \mid F(x_c + \alpha v^I(x_c)) \lneqq F(y)\} \cup \{x_c + \alpha v^I(x_c)\}$\label{line::point-insert}
			}			
		}
		$X^{k + 1} = \hat{X}^k$ \\
		$k = k + 1$
	}
	\Return $X^k$
\end{algorithm}

The line search is an Armijo-type procedure whose scheme is reported in Algorithm \ref{alg::als}. Given a nondominated point and a search direction w.r.t.\ the objectives in $I$, the algorithm returns a new point such that it is ``sufficiently nondominated''. The obtained point is added to the set of nondominated points, while all the points that are now dominated by it are filtered out.

\begin{algorithm}[h]
	\caption{\texttt{ArmijoLS}} \label{alg::als}
	Input: $F:\mathbb{R}^n \rightarrow \mathbb{R}^m$, $I\subseteq\{1,\ldots,m\}$, $\hat{X}$ set of mutually nondominated points w.r.t.\ $F$, $x_c\in \hat{X}$, $v^I(x_c) \in \mathbb{R}^n$, $\theta^I(x_c) \in \mathbb{R}$, $\alpha_0>0$, $\delta\in(0,1)$, $\gamma\in(0,1)$. \\
	$\alpha = \alpha_0$\\
	Let $\hat{X}_I$ be the set of points in $\hat{X}$ that are mutually nondominated w.r.t.\ $F_I$\\
	\While{$\exists\, y\in \hat{X}_I$ s.t.\ $F_I(y) + \boldsymbol{1}\gamma\alpha\theta^I(x_c) < F_I(x_c+\alpha v^I(x_c))$}{
		$\alpha= \delta\alpha$	
	}
	\Return $\alpha$
\end{algorithm}

Algorithm \ref{alg::als} enjoys the following finite termination properties. 

\begin{proposition}[{\cite[Proposition 4]{cocchi2020convergence}}]
	Let $I\subseteq\{1,\ldots,m\}$, $\hat{X}$ be a set of mutually nondominated solutions containing $x_c$; $x_c$ is also nondominated w.r.t.\ $F_I$ and it is such that $\theta^I(x_c)<0$. Then, $\exists\,\alpha>0$, sufficiently small, such that
	$$ F_I(y) + \boldsymbol{1}\gamma\alpha\theta^I(x_c) \nless F_I(x_c+\alpha v^I(x_c)), \quad\forall\,y\in\hat{X}_I,$$
	with $\hat{X}_I$ being the set of points in $\hat{X}$ that are mutually nondominated w.r.t.\ $F_I$. Furthermore, the produced point $x_c+\alpha v^I(x_c)$ is nondominated by any point in $\hat{X}$ with respect to $F$.
\end{proposition}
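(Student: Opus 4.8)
The plan is to establish the two asserted facts separately: first the \emph{finite termination} of the Armijo loop (existence of a small $\alpha$ making the loop guard fail), and then the \emph{$F$-nondominance} of the returned point. The first part is a set-valued version of the classical Armijo backtracking argument; the second exploits the termination condition together with the hypothesis that $x_c$ is nondominated with respect to $F_I$.

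For the termination part, I would start from the descent inequality implied by the definition \eqref{eq::steepest_partial} of $v^I(x_c)$: since $\theta^I(x_c)=\max_{j\in I}\nabla f_j(x_c)^Tv^I(x_c)+\tfrac12\|v^I(x_c)\|^2$, every $j\in I$ satisfies $\nabla f_j(x_c)^Tv^I(x_c)\le\theta^I(x_c)<0$. Fixing $y\in\hat X_I$, I would show that the strict inequality $F_I(y)+\boldsymbol 1\gamma\alpha\theta^I(x_c)<F_I(x_c+\alpha v^I(x_c))$ fails for all sufficiently small $\alpha>0$, splitting into two cases dictated by mutual $F_I$-nondominance of $\hat X_I$. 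If $F_I(y)\not\le F_I(x_c)$, so that $f_j(y)>f_j(x_c)$ for some $j\in I$, then continuity of $F$ gives $f_j(x_c+\alpha v^I(x_c))\to f_j(x_c)<f_j(y)$, while $f_j(y)+\gamma\alpha\theta^I(x_c)\to f_j(y)$; hence the $j$-th component inequality, and thus the strict vector inequality, fails for small $\alpha$. If instead $F_I(y)=F_I(x_c)$ (in particular for $y=x_c$), I would use the first-order expansion $f_j(x_c+\alpha v^I(x_c))=f_j(x_c)+\alpha\nabla f_j(x_c)^Tv^I(x_c)+o(\alpha)\le f_j(x_c)+\alpha\theta^I(x_c)+o(\alpha)$ together with $(\gamma-1)\theta^I(x_c)>0$ to get $f_j(y)+\gamma\alpha\theta^I(x_c)>f_j(x_c+\alpha v^I(x_c))$ for small $\alpha$ and all $j\in I$. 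Since $\hat X_I$ and $I$ are finite, taking the smallest of the resulting thresholds produces a single $\bar\alpha>0$ that works uniformly; because the backtracking drives $\alpha$ to zero along $\alpha_0\delta^t$, the loop necessarily stops.

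For the nondominance part, let $\alpha$ be the returned step and write $x^+=x_c+\alpha v^I(x_c)$. Loop termination means that for every $z\in\hat X_I$ the guard fails, i.e.\ there is an index $j_z\in I$ with $f_{j_z}(z)+\gamma\alpha\theta^I(x_c)\ge f_{j_z}(x^+)$; since $\gamma\alpha\theta^I(x_c)<0$, this yields $f_{j_z}(z)>f_{j_z}(x^+)$. I would then argue by contradiction: if some $y\in\hat X$ satisfied $F(y)\lneqq F(x^+)$, then in particular $F_I(y)\le F_I(x^+)$. Using finiteness of $\hat X$, I can select $z\in\hat X_I$ with $F_I(z)\le F_I(y)$ — taking $z=y$ when $y$ is already $F_I$-nondominated, and otherwise descending a finite chain of $F_I$-dominating points to a nondominated representative. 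Chaining the inequalities gives $F_I(z)\le F_I(y)\le F_I(x^+)$, i.e.\ $f_{j_z}(z)\le f_{j_z}(x^+)$, contradicting $f_{j_z}(z)>f_{j_z}(x^+)$.

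The delicate step is the nondominance claim. The returned $\alpha$ is only the first backtracked step at which the guard fails, so it need \emph{not} realize the full componentwise Armijo decrease relative to $x_c$; one therefore cannot simply assert $F_I(x^+)<F_I(x_c)$ on every coordinate and invoke nondominance of $x_c$ directly. The argument must instead extract, for each $z\in\hat X_I$, a \emph{single} coordinate on which $z$ strictly exceeds $x^+$, and then lift this information from $F_I$ to the full $F$ and from $\hat X_I$ to all of $\hat X$ through the reduction to a nondominated representative. Verifying that this reduction needs only $F_I(z)\le F_I(y)$ (rather than strict domination), and that the finiteness of $\hat X$ legitimizes the descending chain, is where I would concentrate the care.
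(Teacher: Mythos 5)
The paper never proves this proposition itself: it is recalled verbatim, with a citation, from [Proposition 4] of the \texttt{FSD} reference, so there is no in-paper proof to compare your argument against; judged on its own merits, your proof is correct and follows the standard route used for this result. Both halves check out: the exhaustiveness of your two termination cases is precisely where the hypothesis that $x_c$ is nondominated w.r.t.\ $F_I$ is needed (it rules out $F_I(y)\lneqq F_I(x_c)$, leaving only $F_I(y)=F_I(x_c)$, handled by the first-order expansion with $(1-\gamma)\theta^I(x_c)<0$, and the case $f_j(y)>f_j(x_c)$ for some $j\in I$, handled by continuity), and finiteness of $\hat{X}_I$ yields a uniform threshold. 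Your second half also correctly sidesteps the trap you flag: $F$-nondominance of $x^+=x_c+\alpha v^I(x_c)$ is extracted directly from the failed loop guard, which gives for each $z\in\hat{X}_I$ an index $j_z$ with $f_{j_z}(z)>f_{j_z}(x^+)$ (using $\gamma\alpha\theta^I(x_c)<0$), and a putative dominator $y\in\hat{X}$ is reduced to a representative $z\in\hat{X}_I$ with $F_I(z)\le F_I(y)$ via a finite domination chain, which cannot cycle because $\lneqq$ is a strict partial order on the $F_I$-values and $\hat{X}$ is finite.
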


\begin{remark}
	An improved version of Algorithm \ref{alg::als} was also proposed in \cite{cocchi2020convergence}, which is based on an extrapolation strategy and allows to possibly obtain many nondominated solutions along the search direction. When used within Algorithm \ref{alg::FSD}, the extrapolation technique does not alter theoretical convergence results, but the resulting algorithm is reported to be significantly more effective.
\end{remark}

Now, we shall recall the convergence properties of Algorithm \ref{alg::FSD}, which are based on the concept of \textit{linked sequence} \cite{liuzzi2016derivative}.

\begin{definition}
	Let $\{X^k\}$ be the sequence of sets of nondominated points produced
	by Algorithm \ref{alg::FSD}. We define a linked sequence as a sequence $\{x_{j_k}\}$ such that, for any $k=1,2,\ldots$, the point $x_{j_k}\in X^k$ is generated at iteration $k-1$ of Algorithm \ref{alg::FSD} by the point $x_{j_{k-1}}\in X^{k-1}$.
\end{definition}

\begin{proposition}[{\cite[Proposition 5]{cocchi2020convergence}}]
	Let us assume that there exists $x_0\in X^0$ such that
	\begin{itemize}
		\item $x_0$ is not Pareto stationary;
		\item the set $\mathcal{L}(x_0) = \bigcup_{j=1}^{m}\{x\in\mathbb{R}^n\mid f_j(x)\le f_j(x_0)\}$ is compact.
	\end{itemize}
	Let $\{X^k\}$ be the sequence of sets of nondominated points produced by Algorithm \ref{alg::FSD}. Let $\{x_{j_k}\}$ be a linked sequence, then it admits limit points and every limit point is Pareto-stationary for problem \eqref{eq:mo_prob}.
\end{proposition}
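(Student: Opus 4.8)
The plan is to prove the two assertions separately: first that any linked sequence $\{x_{j_k}\}$ admits limit points, and then that every such limit point is Pareto stationary. For the existence of limit points, the strategy is to show that the whole linked sequence lies in the compact set $\mathcal{L}(x_0)$, so that boundedness and Bolzano--Weierstrass give the claim. The key is the invariant that every $X^k$ contains a point $z_k$ with $F(z_k)\le F(x_0)$. This holds for $k=0$ with $z_0=x_0$, and propagates across iterations: in Algorithm \ref{alg::FSD} a point is discarded from $\hat X^k$ only when a newly inserted point dominates it (line \ref{line::point-insert}), so any descendant of $z_k$ produced within iteration $k$ still has objective vector $\le F(x_0)$; since only finitely many points are inserted per iteration and domination strictly decreases $F$ in the $\lneqq$ order, the corresponding domination chain terminates at a survivor $z_{k+1}\in X^{k+1}$. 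Given this invariant, no $x\in X^k$ can satisfy $F(x_0)<F(x)$: otherwise $F(z_k)\le F(x_0)<F(x)$ would give $F(z_k)\lneqq F(x)$, contradicting mutual nondominance of $X^k$. Hence $X^k\subseteq\mathcal{L}(x_0)$ for all $k$, and in particular the linked sequence lies in $\mathcal{L}(x_0)$.

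For Pareto stationarity I would argue by contradiction. Suppose a limit point $\bar x$ of the linked sequence is not Pareto stationary, so $\theta(\bar x)<0$, and fix a subsequence $x_{j_k}\to_K\bar x$. Each successor is $x_{j_{k+1}}=x_{j_k}+\alpha_k v^{I_k}(x_{j_k})$ for some subset $I_k$ with $\theta^{I_k}(x_{j_k})<0$; since there are finitely many subsets, I refine $K$ so that $I_k\equiv\bar I$ is constant and, using the exit test of Algorithm \ref{alg::als} with $y=x_{j_k}$, so that a single coordinate $\bar\ell\in\bar I$ realizes the Armijo decrease $f_{\bar\ell}(x_{j_{k+1}})\le f_{\bar\ell}(x_{j_k})+\gamma\alpha_k\theta^{\bar I}(x_{j_k})$. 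Because $\theta^{\bar I}\le\theta$ pointwise (the max in \eqref{eq::steepest_partial} is taken over fewer objectives than in \eqref{eq::steepest_common}) and the maps $\theta^{\bar I},v^{\bar I}$ are continuous, near $\bar x$ we have $\theta^{\bar I}(x_{j_k})\le\theta(x_{j_k})\to\theta(\bar x)<0$ and $v^{\bar I}(x_{j_k})\to v^{\bar I}(\bar x)$ bounded.

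I then split on the behaviour of $\alpha_k$. If $\alpha_k\to 0$ along the subsequence, then the step $\alpha_k/\delta$ was rejected, so some $y_k\in\hat X_{\bar I}$ satisfies $F_{\bar I}(y_k)+\boldsymbol{1}\gamma(\alpha_k/\delta)\theta^{\bar I}(x_{j_k})<F_{\bar I}(x_{j_k}+(\alpha_k/\delta)v^{\bar I}(x_{j_k}))$; passing to the limit, using compactness of $\mathcal{L}(x_0)$ to extract a limit $\bar y$ of the $y_k$, a first-order expansion of $F_{\bar I}$, and $\gamma\in(0,1)$, contradicts $\theta^{\bar I}(\bar x)<0$. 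If instead $\alpha_k$ stays bounded away from $0$, the per-step decrease $f_{\bar\ell}(x_{j_k})-f_{\bar\ell}(x_{j_{k+1}})\ge\gamma\alpha_k|\theta^{\bar I}(x_{j_k})|$ is bounded below by a positive constant; reconciling this with $f_{\bar\ell}(x_{j_k})\to f_{\bar\ell}(\bar x)$ along $K$ and the boundedness of $f_{\bar\ell}$ on $\mathcal{L}(x_0)$ forces $\alpha_k\to 0$, reducing to the first case. In either case we reach a contradiction, so $\theta(\bar x)=0$.

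The main obstacle is the second case and, more generally, the fact that a \emph{partial} descent step decreases only the objectives in $I_k$ and may \emph{increase} the others, so no single objective is monotone along the linked sequence and the usual telescoping of sufficient decrease is unavailable. The plan is to exploit that $\bar x$ is a limit point — so the linked sequence returns arbitrarily close to $\bar x$ infinitely often — together with the fixed-coordinate decrease and the continuity of $\theta^{\bar I},v^{\bar I}$, to show that a persistent positive decrease in coordinate $\bar\ell$ cannot be sustained. The delicate point, which forces reliance on the compactness of $\mathcal{L}(x_0)$ and on the nondominance bookkeeping of Algorithm \ref{alg::FSD}, is that the reference point $y_k$ in the line-search test need not coincide with $x_{j_k}$, so the limiting argument must carefully track an auxiliary convergent subsequence of such reference points.
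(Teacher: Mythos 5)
First, note that the paper does not actually prove this proposition: it is imported verbatim from \cite{cocchi2020convergence}, so the only in-paper benchmark is the proof of the analogous result for \texttt{IFSD} in Section \ref{sec:convergence}, which rests on two devices: the level-set invariant of Lemma \ref{lemma:lemma} and a limiting argument on the Armijo steps along a linked subsequence. Your first part (existence of limit points) is correct and is essentially that same device: removal of a point at line \ref{line::point-insert} happens only upon domination, so every $X^k$ contains a point $z_k$ with $F(z_k)\le F(x_0)$, and mutual nondominance then gives $X^k\subseteq\mathcal{L}(x_0)$. Your case ``$\alpha_k\to 0$'' is also sound in substance, although the contradiction you reach is best stated at finite $k$: the rejected-step inequality plus a Taylor expansion and $\theta^{\bar I}(x_{j_k})\le-\varepsilon'$ make $y_k$ strictly dominate $x_{j_k}$ in all the $\bar I$ components, contradicting the mutual nondominance of $\hat{X}_{\bar I}$ in Algorithm \ref{alg::als}; the purely limiting version you describe only yields $F_{\bar I}(\bar y)\le F_{\bar I}(\bar x)$, which by itself contradicts nothing.

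The genuine gap is your second case ($\alpha_k$ bounded away from zero), and it is the crux, as you yourself concede. The claim that reconciling the per-step decrease with $f_{\bar\ell}(x_{j_k})\to f_{\bar\ell}(\bar x)$ and the boundedness of $f_{\bar\ell}$ on $\mathcal{L}(x_0)$ ``forces $\alpha_k\to 0$'' is a non sequitur. The inequality $f_{\bar\ell}(x_{j_{k+1}})\le f_{\bar\ell}(x_{j_k})-\gamma\alpha_k\varepsilon'$ holds only between \emph{consecutive} linked points at indices $k\in K$; between $x_{j_{k+1}}$ and the next $K$-index the linked sequence may take partial steps w.r.t.\ subsets not containing $\bar\ell$, which can increase $f_{\bar\ell}$ and exactly undo the decrease. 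The decreases at $K$-indices do not telescope, so boundedness gives nothing: $f_{\bar\ell}$ may simply oscillate. The device that closes this hole in the paper's \texttt{IFSD} proof is forward-looking, not backward-looking: apply the Lemma \ref{lemma:lemma} invariant to the \emph{generated} point $x_{j_{k+1}}$ to obtain, at the next index $k_1(k)\in K$, a point $y_k\in X^{k_1(k)}$ with $F(y_k)\le F(x_{j_{k+1}})$; mutual nondominance of $X^{k_1(k)}$, which contains $x_{j_{k_1(k)}}$, then yields a coordinate $h(k)$ with $f_{h(k)}(x_{j_{k_1(k)}})\le f_{h(k)}(x_{j_{k+1}})$, and passing to the limit along a subsequence with $h(k)\equiv h$ gives $f_h(\bar x)\le f_h(\bar z)$, which combined with a sufficient decrease in coordinate $h$ forces $\bar\alpha=0$. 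Nothing in your sketch plays this role.

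Moreover, even if you import that device, the original \texttt{FSD} presents an extra obstruction that your write-up never confronts: Algorithm \ref{alg::als} guarantees sufficient decrease only in \emph{some} coordinate per reference point (with $y=x_{j_k}$ this is your $\bar\ell\in\bar I$), whereas the chain argument produces a possibly \emph{different} coordinate $h$; if $h\neq\bar\ell$, the two relations $f_{\bar\ell}(\bar z)\le f_{\bar\ell}(\bar x)-\gamma\bar\alpha\varepsilon'$ and $f_h(\bar x)\le f_h(\bar z)$ are perfectly compatible and no contradiction follows. This mismatch is exactly what cannot happen for \texttt{IFSD}: the preliminary step at step \ref{step:line_search} of Algorithm \ref{alg::improved_FSD} uses the classical single-point Armijo condition of \cite{fliege2000steepest}, which decreases \emph{all} objectives simultaneously, so the decrease is available in whatever coordinate $h$ the chain delivers --- this is what the paper means when it says the preliminary steepest descent step ``ensures the sufficient decrease.'' Closing the same hole for Algorithm \ref{alg::FSD} requires the finer argument of \cite{cocchi2020convergence}, which your proposal does not reconstruct.
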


\begin{figure*}
	\centering
	\subfloat[\label{fig::FIG_FSDA-a}]{\includegraphics[width=0.35\textwidth]{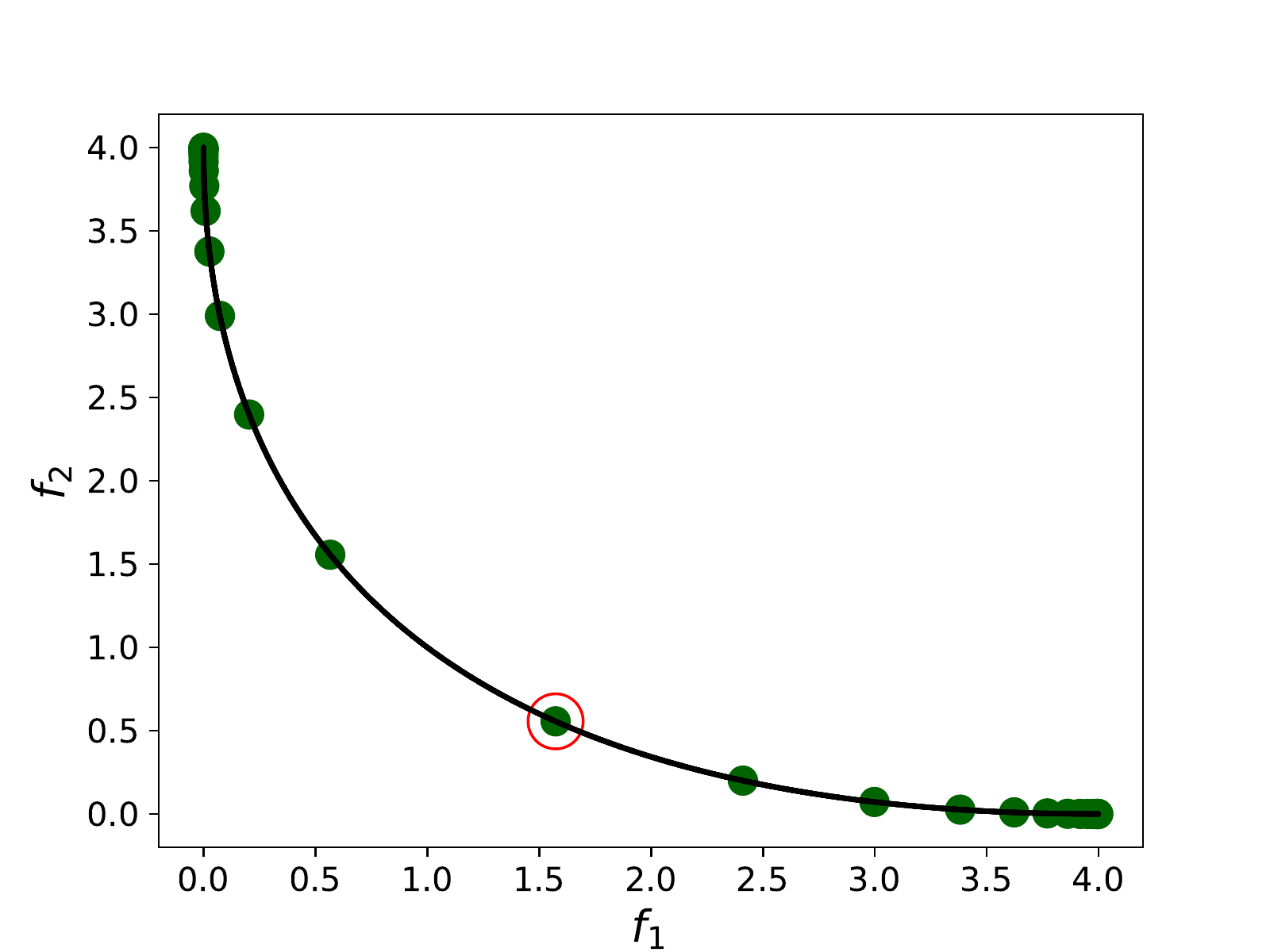}}
	\hfil
	\subfloat[\label{fig::FIG_FSDA-b}]{\includegraphics[width=0.35\textwidth]{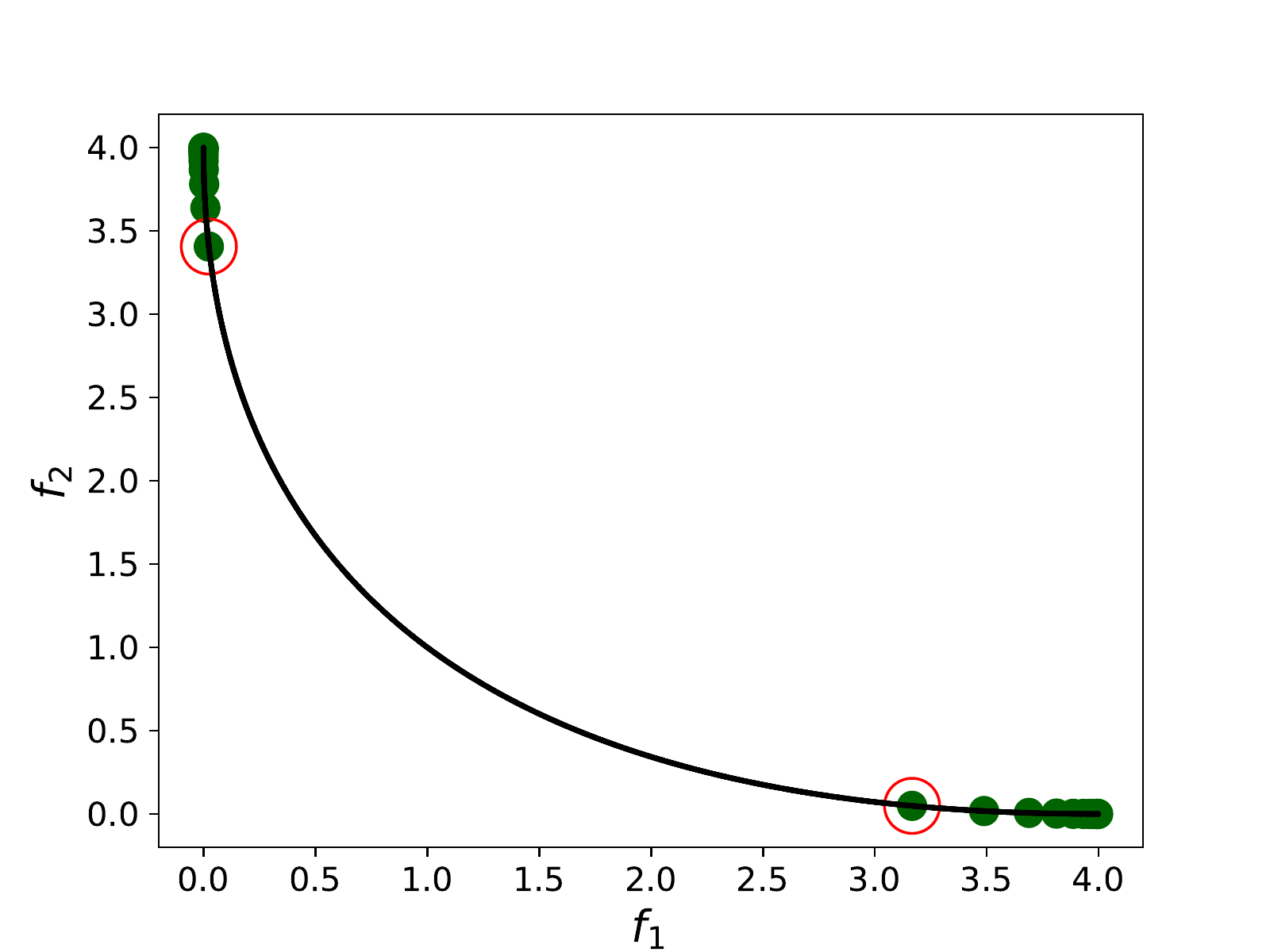}}
	\\
	\subfloat[\label{fig::FIG_FSDA-c}]{\includegraphics[width=0.35\textwidth]{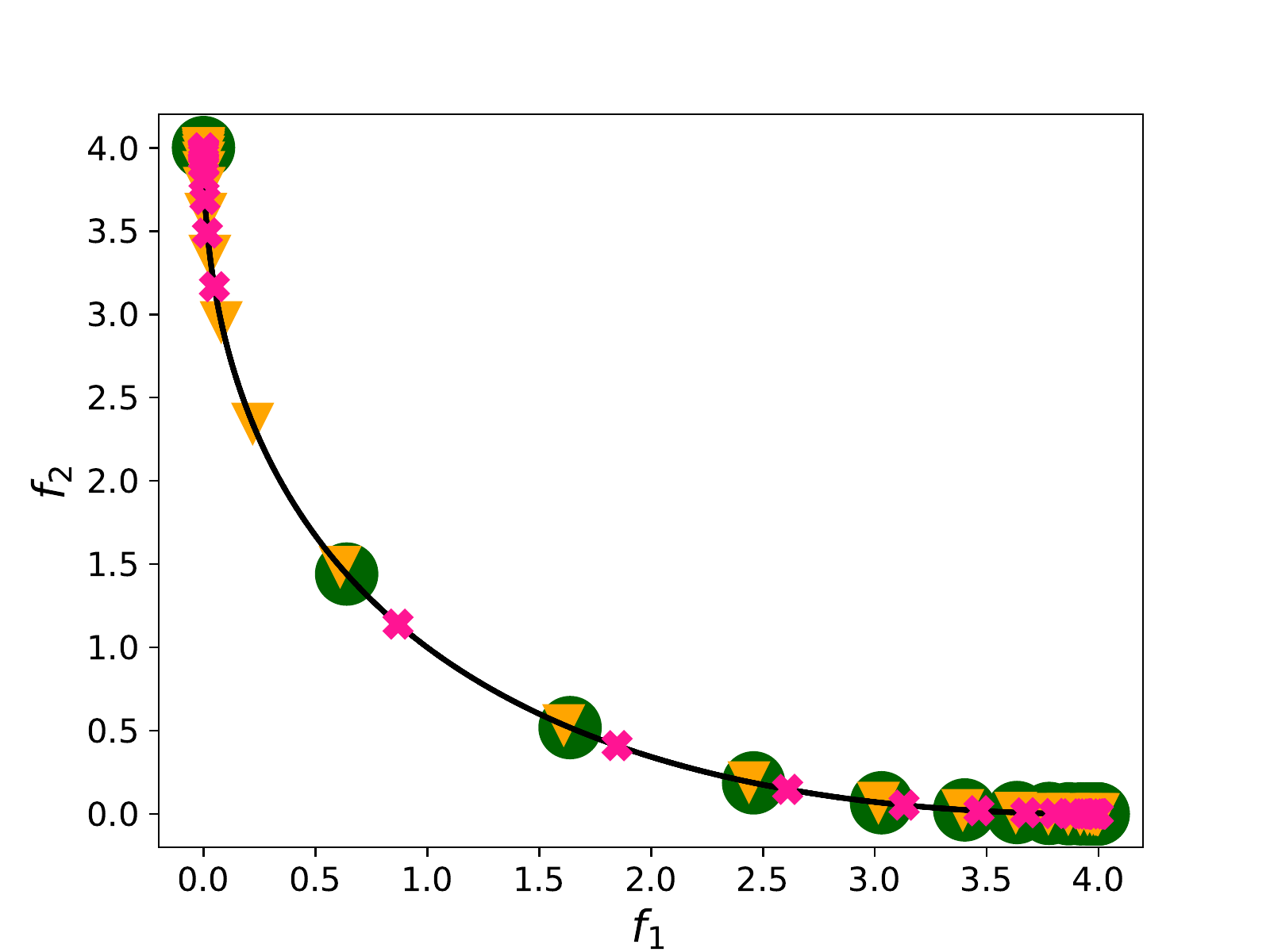}}
	\hfil
	\subfloat[\label{fig::FIG_FSDA-d}]{\includegraphics[width=0.35\textwidth]{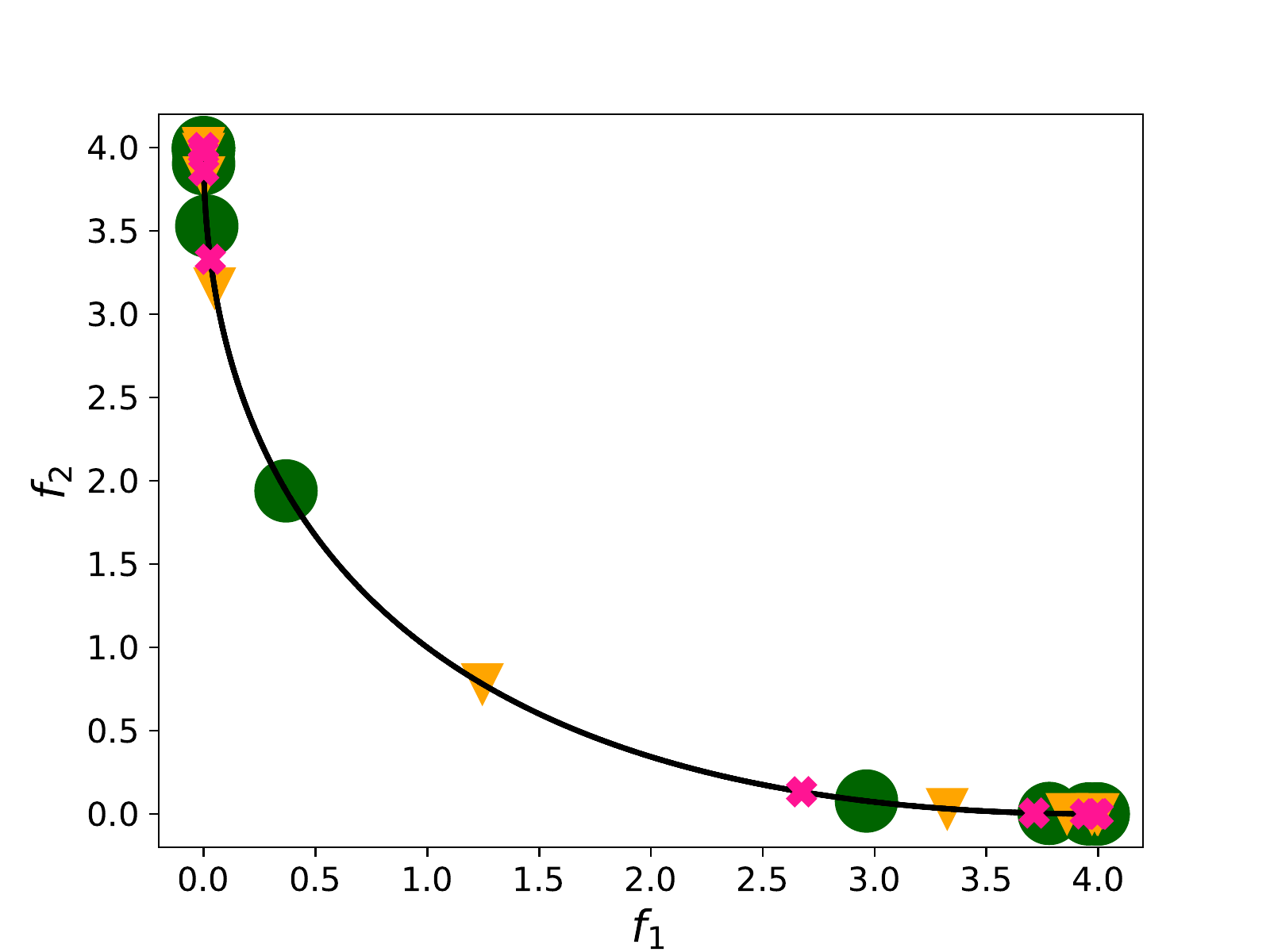}}
	\caption{Pareto fronts obtained by the \texttt{FSD} algorithm on the convex JOS problem ($n=5$). (a) \texttt{FSD} starts from 1 Pareto point; (b) \texttt{FSD} starts from 2 Pareto points; (c) 3 independent \texttt{FSD} runs, started from 3 different random points; (d) 3 independent runs of \texttt{FSD} with the extrapolation strategy, started from the same 3 random points as in (c).}
	\label{fig::FIG_FSDA}
\end{figure*}

\subsection{\texttt{FSD} may not span the Pareto front}
\label{sec:no_span}
The \texttt{FSD} algorithm constitutes, in practice, a significant improvement w.r.t.\ the simple multi-start steepest descent strategy for multi-objective optimization. However, in experimental settings, it is not uncommon to observe situations where \texttt{FSD} is unable to retrieve large portions of the Pareto front.

Here, we highlight this shortcoming and argue that it is the direct result of algorithmic design. In particular, the first condition at step \ref{step:forall} of Algorithm \ref{alg::FSD} makes the outcome of the algorithm very strongly dependent on the starting point(s).

When a point $x_c$ is considered for exploration in Algorithm \ref{alg::FSD}, a partial descent direction obtained according to the subset of objectives $I\subseteq \{1,\ldots,m\}$ is only considered if $x_c$ is nondominated within $X^k$ w.r.t.\ $F_I$; in other words, there is no $y\in X^k$ such that $F_I(y)\lneqq F_I(x_c)$. This condition was required by the authors of \cite{cocchi2020convergence} in order to establish finite termination properties for the line search (Algorithm \ref{alg::als}).

Unfortunately, that same condition results in a limited fraction of points in $X^k$ to be used for starting a partial descent search. 
This fact can be visualized, with very extreme outcomes, in the bi-objective case; indeed, when $m=2$, for each of the two proper subsets of indices, $I_1=\{1\}$ and $I_2=\{2\}$ there is only one point that satisfies the (partial) nondominance condition: $x_{I_1} = \argmin_{x\in X^k}f_1(x)$ and $x_{I_2} = \argmin_{x\in X^k}f_2(x)$.

Thus, partial descent is only carried out starting from the two current extreme points in the Pareto front. Moreover, these partial descent steps will only allow to explore, outwards, the extreme parts of the current front approximation, whereas the other descent step will mainly drive points to Pareto stationarity; as a result, even large holes within the current solutions set cannot be filled.

Taking the reasoning to the extreme, let us assume that the starting set of solutions already lies on the Pareto front; if the set contains only one point, then by repeated partial descent w.r.t.\ $I_1$ and $I_2$ the entire Pareto front can be spanned quite uniformly; this situation is depicted in Figure \ref{fig::FIG_FSDA-a}. If, on the other hand, there are two starting solutions, possibly far away from each other in the objective space, then only the extreme parts of the front will be spanned, while the gap between the two points is not tackled (Figure \ref{fig::FIG_FSDA-b}). Of course, the same reasoning applies with more than two starting points.

The paradoxical behavior of the algorithm is such that it might be convenient to start far away from the Pareto front. In this way, \texttt{FSD} may have many iterations at its disposal to increase the size of the set $X^k$ and uniformly span the objectives space; points are then driven to Pareto stationarity thanks to steps carried out considering $I=\{1,2\}$. Anyhow, the results are still influenced, somewhat randomly, by the starting solutions, as shown in Figure \ref{fig::FIG_FSDA-c}. Moreover, the extreme parts of the front are always spanned much more densely than the central one. We shall remark that, as the intermediate regions of the front often provide the most interesting trade-offs to users, this is a very significant issue in practice.

The extrapolation technique proposed in \cite{cocchi2020convergence} might allow to partly alleviate the issue discussed here, as much more nondominated solutions are obtained at each iteration; however, it is again the exploration of the extreme regions that is mainly enhanced and sped up, with possibly overall counterproductive results (Figure \ref{fig::FIG_FSDA-d}).

\section{Improved Front Steepest Descent}
\label{sec:improved_fsd}
In Algorithm \ref{alg::improved_FSD}, we report the scheme of a modified Front Steepest Descent (\texttt{IFSD}) algorithm that overcomes the limitations of Algorithm \ref{alg::FSD} discussed in Section \ref{sec:no_span}. 

\SetInd{1ex}{1ex}
\begin{algorithm}[!h]
	\caption{\texttt{ImprovedFrontSteepestDescent}} \label{alg::improved_FSD}
	Input: $F:\mathbb{R}^n \rightarrow \mathbb{R}^m$, $X^0$ set of mutually nondominated points w.r.t.\ $F$, $\alpha_0>0,$ $\delta\in(0,1),\gamma\in(0,1)$. \\
	$k = 0$\\
	\While{a stopping criterion is not satisfied}{
		$\hat{X}^k = X^k$ \\
		\ForAll{$x_c\in X^k$}{
			\If{$x_c \in \hat{X}^k$ \label{step:x_c}}{
				\If{$\theta(x_c)<0$ \label{step.if_cond}}{
					$\alpha_c^k = \max_{h=0,1,\ldots} \{\alpha_0\delta^h\mid F(x_c+\alpha_0\delta^h v(x_c))\le F(x_c)+\boldsymbol{1}\gamma\alpha_0\delta^h\theta(x_c)\}$ \label{step:line_search}
				}
				\Else{$\alpha_c^k=0$ \label{step:end_if_else}}
				$z_c^k = x_c+\alpha_c^kv(x_c)$ \label{step:z}\\
				$\hat{X}^k = \hat{X}^k \setminus \{y \in \hat{X}^k \mid F(z^k_c) \lneqq F(y)\} \cup \{z^k_c\}$ \label{step:add_zk}\\
				\ForAll{$I\subseteq\{1,\ldots,m\}$ s.t.\ $\theta^I(z_c^k) < 0$}{
					\If{$z_c^k\in\hat{X}^k$}{
						$\alpha_c^I$ = $\max_{h=0,1,\ldots} \{\alpha_0\delta^h\mid \forall\,y\in\hat{X}^k\,\exists j \in \left\{1,\ldots, m\right\}\text{ s.t. } f_j(z_c^k+\alpha_0\delta^h v^I(z_c^k)) < f_j(y)\}$ \label{step:second_ls} \\
						$\hat{X}^k = \hat{X}^k \setminus \{y \in \hat{X}^k \mid F(z_c^k + \alpha_c^I v^I(z_c^k)) \lneqq F(y)\} \cup \{z_c^k + \alpha_c^I v^I(z_c^k)\}$ \label{step:insert_partial}\\
				}}
				
			}
			
		}
		$X^{k + 1} = \hat{X}^k$ \\
		$k = k + 1$
	}
	\Return $X^k$
\end{algorithm}

Algorithm \ref{alg::improved_FSD} includes a bunch of modifications w.r.t.\ the original \texttt{FSD} approach:
\begin{itemize}
	\item for any point in $X^k$ that is still nondominated when it is considered for exploration, a preliminary steepest descent step is carried out; this step exploits a classical single point Armijo line search \cite{fliege2000steepest};
	\item further searches w.r.t.\ subsets of objectives start at the obtained point, as long as it is not dominated;
	\item for partial descent searches, we require the obtained point to be nondominated by all other points in $\hat{X}^k$.
\end{itemize}

The idea is that, with these modifications, all points may be used to start exploration based on partial descent; convergence of all the produced points towards stationarity is then forced by means of the ``preliminary'' steepest descent step, that ensures the sufficient decrease. In the next section we prove that the algorithm is well defined and actually produces convergent sequences of points.

\subsection{Convergence analysis}
\label{sec:convergence}
In this section, we provide the formal convergence analysis for Algorithm \ref{alg::improved_FSD}.

\begin{proposition}
	The line search at step \ref{step:line_search} of Algorithm \ref{alg::improved_FSD} is well defined.
\end{proposition}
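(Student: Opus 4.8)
The plan is to reduce well-definedness of the line search to the non-emptiness of the set of exponents for which the sufficient-decrease test is met. Since the candidate step sizes $\alpha_0\delta^h$ form, for $\delta\in(0,1)$, a strictly decreasing sequence bounded above by $\alpha_0$, the quantity $\max_{h=0,1,\ldots}\{\alpha_0\delta^h\mid \text{cond}(h)\}$ is attained as soon as at least one $h$ satisfies the condition (the maximum being realized at the smallest such $h$). Thus it suffices to exhibit some finite $h\ge 0$ that works.

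First I would use that step \ref{step:line_search} is executed only under the guard $\theta(x_c)<0$, so that $v(x_c)$ is a genuine common descent direction. From the definitions in \eqref{eq::steepest_common}, the optimal value satisfies $\max_{j}\nabla f_j(x_c)^Tv(x_c)+\tfrac12\|v(x_c)\|^2=\theta(x_c)$, whence $\nabla f_j(x_c)^Tv(x_c)\le \theta(x_c)<0$ for every $j=1,\ldots,m$.

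Next, invoking continuous differentiability of each $f_j$, a first-order expansion along $v(x_c)$ gives $\lim_{\alpha\to 0^+}\frac{f_j(x_c+\alpha v(x_c))-f_j(x_c)}{\alpha}=\nabla f_j(x_c)^Tv(x_c)\le\theta(x_c)$. Because $\gamma\in(0,1)$ and $\theta(x_c)<0$, one has $\gamma\theta(x_c)>\theta(x_c)\ge \nabla f_j(x_c)^Tv(x_c)$, i.e.\ there is a strictly positive slack $(1-\gamma)(-\theta(x_c))$ between the realized directional derivative and the required slope. Hence, for each $j$ there exists $\bar\alpha_j>0$ such that $f_j(x_c+\alpha v(x_c))\le f_j(x_c)+\gamma\alpha\theta(x_c)$ for all $\alpha\in(0,\bar\alpha_j]$; taking $\bar\alpha=\min_{j}\bar\alpha_j>0$ (finite $m$) makes the vector Armijo condition $F(x_c+\alpha v(x_c))\le F(x_c)+\boldsymbol{1}\gamma\alpha\theta(x_c)$ hold for all $\alpha\in(0,\bar\alpha]$. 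Since $\alpha_0\delta^h\to 0$, some finite $h$ yields $\alpha_0\delta^h\le\bar\alpha$, so the admissible set is non-empty and the maximum is attained, proving the claim.

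The main obstacle is essentially bookkeeping rather than conceptual: I must pass from the per-objective thresholds to a single uniform step bound — this is where finiteness of $m$ is used — and argue that the ``max'' defining $\alpha_c^k$ is genuinely attained (not merely a supremum), which I handle by noting that the admissible step sizes form a subset of the decreasing sequence $\{\alpha_0\delta^h\}$ bounded above by $\alpha_0$. This is precisely the classical single-point Armijo argument of \cite{fliege2000steepest}, here applied verbatim to the preliminary common-descent step of Algorithm \ref{alg::improved_FSD}.
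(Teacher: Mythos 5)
Your proof is correct and follows essentially the same route as the paper: the paper simply invokes \cite[Lemma 4]{fliege2000steepest} together with the guard $\theta(x_c)<0$ at step \ref{step.if_cond}, while you reprove that lemma inline (descent property of $v(x_c)$, the $(1-\gamma)(-\theta(x_c))$ slack, uniformity over the finitely many objectives, and attainment of the max at the smallest admissible $h$). No gaps; the argument is just a self-contained expansion of the citation the paper relies on.
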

\begin{proof}
	The result follows from \cite[Lemma 4]{fliege2000steepest} and by the if condition at step \ref{step.if_cond} that ensures that $\theta(x_c)<0$.
\end{proof}

\begin{proposition}
	\label{prop:well_def_2}
	Step \ref{step:second_ls} of Algorithm \ref{alg::improved_FSD} is well defined if $z_c^k$ is nondominated with respect to points in $\hat{X}^k$.
\end{proposition}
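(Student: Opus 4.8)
The plan is to show that the backtracking at step \ref{step:second_ls} admits at least one admissible exponent $h$, so that the maximum defining $\alpha_c^I$ is taken over a nonempty set; since the candidate step sizes $\{\alpha_0\delta^h\}_{h\ge 0}$ form a decreasing sequence converging to $0$ and bounded above by $\alpha_0$, any nonempty admissible subset attains its maximum, whence well-definedness follows. Concretely, writing $w(\alpha) = z_c^k + \alpha v^I(z_c^k)$, I would prove that the acceptance condition
\[
\forall\,y\in\hat{X}^k,\ \exists\,j\in\{1,\ldots,m\}\ \text{s.t.}\ f_j(w(\alpha)) < f_j(y)
\]
holds for every sufficiently small $\alpha>0$, and then pick $h$ large enough that $\alpha_0\delta^h$ falls below the resulting threshold.

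First I would recall, following \cite{fliege2000steepest}, that since step \ref{step:second_ls} is only entered when $\theta^I(z_c^k)<0$, the partial steepest descent direction satisfies $\nabla f_j(z_c^k)^Tv^I(z_c^k)\le \theta^I(z_c^k)<0$ for every $j\in I$ (this follows from $\theta^I(z_c^k)=\max_{j\in I}\nabla f_j(z_c^k)^Tv^I(z_c^k)+\tfrac12\|v^I(z_c^k)\|^2$ and $\|v^I(z_c^k)\|^2\ge 0$). Hence, by differentiability, each $f_j$ with $j\in I$ is strictly decreasing along $v^I(z_c^k)$ near $z_c^k$: there is $\alpha_j>0$ with $f_j(w(\alpha))<f_j(z_c^k)$ for all $\alpha\in(0,\alpha_j]$. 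This is the ingredient that compensates for the fact that the acceptance condition \emph{fails} at $\alpha=0$ — indeed $z_c^k$ itself belongs to $\hat{X}^k$, and no objective can strictly beat it at $\alpha=0$.

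Then I would fix an arbitrary $y\in\hat{X}^k$ and split into two cases according to whether $F(y)\le F(z_c^k)$. If the inequality fails, there is an index $j_0$ with $f_{j_0}(z_c^k)<f_{j_0}(y)$, and continuity of $\alpha\mapsto f_{j_0}(w(\alpha))$ preserves this strict inequality for all small enough $\alpha$. If instead $F(y)\le F(z_c^k)$, then the hypothesis that $z_c^k$ is nondominated w.r.t.\ $\hat{X}^k$ rules out $F(y)\lneqq F(z_c^k)$, forcing $F(y)=F(z_c^k)$; picking any $j_1\in I$ (the set $I$ is nonempty because $\theta^I(z_c^k)<0$), the descent property above gives $f_{j_1}(w(\alpha))<f_{j_1}(z_c^k)=f_{j_1}(y)$ for small $\alpha$. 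In both cases the acceptance condition is met at $y$ for every $\alpha$ below some $\alpha_y>0$.

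The final step is to make the threshold uniform over $y$. Because $\hat{X}^k$ is finite (the algorithm starts from a finite set and adds finitely many points per iteration), $\bar\alpha := \min_{y\in\hat{X}^k}\alpha_y$ is a positive minimum, so the acceptance condition holds for all $\alpha\in(0,\bar\alpha]$; choosing $h$ with $\alpha_0\delta^h\le\bar\alpha$ then makes the admissible set nonempty and the argument concludes. I expect the main obstacle to be precisely the tie case $F(y)=F(z_c^k)$ — including the self-comparison $y=z_c^k$ — where no continuity argument suffices and one genuinely needs the \emph{strict} descent of $v^I(z_c^k)$ along the objectives indexed by $I$; the remaining case is a routine continuity-plus-finiteness argument.
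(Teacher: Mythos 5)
Your proof is correct and follows essentially the same route as the paper's: a case split on each $y\in\hat{X}^k$ between the tie case $F(y)=F(z_c^k)$ (handled via strict descent of all $f_j$, $j\in I$, along $v^I(z_c^k)$, which the paper gets from \cite[Lemma 4]{fliege2000steepest}) and the case where some $f_{j_0}(z_c^k)<f_{j_0}(y)$ (handled by continuity), followed by taking a uniform threshold over the finitely many $y$. Your write-up is merely more explicit than the paper's on two points it leaves implicit --- that nondominance makes the two cases exhaustive, and that the maximum over the nonempty admissible set of step sizes is attained.
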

\begin{proof}
	Let $y$ be any point in $\hat{X}^k$; if $F(y)=F(z_c^k)$, then by \cite[Lemma 4]{fliege2000steepest} and the condition $\theta^I(z_c^k)<0$, there exists $\bar{\alpha}>0$ such that $F_I(z_c^k+\alpha v^I(z_c^k))<F_I(z_c^k) = F_I(y)$ for all $\alpha<\bar{\alpha}$; thus there exists $h$ sufficiently large such that $f_j(z_c^k+\alpha_0\delta^h v^I(z_c^k)) < f_j(y)$ for all $j\in I$.
	If, on the other hand, there exists $j \in \left\{1,\ldots, m\right\}$ such that $f_j(z_c^k)<f_j(y)$, then by the continuity of $F$ there exists $\alpha=\alpha_0\delta^h$ sufficiently small such that $f_j(z_c^k+\alpha v^I(z_c^k))<f_j(y)$. Thus, the condition can be satisfied for all $y\in\hat{X}^k$ and  $\alpha_c^I$ is the minimum of the corresponding values of $\alpha_0\delta^h$.
\end{proof}

\begin{proposition}
	\label{prop:nondom}
	If $X^k$ contains mutually nondominated points with respect to $F$, then $\hat{X}^k$ contains nondominated points at any time during iteration $k$; thus step \ref{step:second_ls} is always well defined and $X^{k+1}$ is finally a set of nondominated solutions. 
\end{proposition}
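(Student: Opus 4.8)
The plan is to establish, by induction over the sequence of modifications performed during iteration $k$, the invariant that $\hat{X}^k$ remains a set of mutually nondominated points with respect to $F$. The base case is immediate, since $\hat{X}^k$ is initialized to $X^k$, which is mutually nondominated by hypothesis. The set $\hat{X}^k$ is then altered only at steps \ref{step:add_zk} and \ref{step:insert_partial}, both of which share the same structure: a candidate point $w$ is produced, every point dominated by $w$ is discarded, and $w$ is inserted. The crux is therefore a single elementary lemma: if $S$ is mutually nondominated and $w$ is dominated by no element of $S$, then $(S\setminus\{y\in S\mid F(w)\lneqq F(y)\})\cup\{w\}$ is again mutually nondominated. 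This is straightforward: the surviving points (a subset of $S$) do not dominate each other, $w$ dominates none of them since all points it dominates were removed, and none of them dominates $w$ by assumption.

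Given this lemma, the work reduces to verifying, for each of the two update steps, that the inserted point is not dominated by any point of the current $\hat{X}^k$. For the partial-descent insertion at step \ref{step:insert_partial} this is immediate from the rule defining $\alpha_c^I$ at step \ref{step:second_ls}: by construction, for every $y$ in the current set there is an index $j$ with $f_j(z_c^k+\alpha_c^Iv^I(z_c^k))<f_j(y)$, which directly rules out $F(y)\lneqq F(z_c^k+\alpha_c^Iv^I(z_c^k))$.

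The delicate case is the steepest-descent insertion at step \ref{step:add_zk}. When $\theta(x_c)<0$, I would combine the Armijo acceptance condition at step \ref{step:line_search} with $\alpha_c^k>0$ and $\theta(x_c)<0$ to get $F(z_c^k)\le F(x_c)+\boldsymbol{1}\gamma\alpha_c^k\theta(x_c)<F(x_c)$, hence $F(z_c^k)\lneqq F(x_c)$. Now suppose some $y$ in the current $\hat{X}^k$ dominated $z_c^k$; combining $F(y)\lneqq F(z_c^k)$ with $F(z_c^k)\lneqq F(x_c)$ would, by transitivity of $\lneqq$, give $F(y)\lneqq F(x_c)$, which is impossible since $x_c$ and $y$ both belong to the mutually nondominated set $\hat{X}^k$ (note that $x_c\in\hat{X}^k$ by the guard at step \ref{step:x_c}, which is not modified before step \ref{step:add_zk}, and the case $y=x_c$ is itself excluded by $F(z_c^k)\lneqq F(x_c)$). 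When instead $\theta(x_c)=0$, step \ref{step:end_if_else} gives $z_c^k=x_c\in\hat{X}^k$, which is trivially undominated. Either way the lemma applies and the invariant is preserved.

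With the invariant in hand, the two remaining claims follow at once. Step \ref{step:second_ls} is executed only under the guard $z_c^k\in\hat{X}^k$; since $\hat{X}^k$ is mutually nondominated at that moment, $z_c^k$ is nondominated with respect to $\hat{X}^k$, so Proposition \ref{prop:well_def_2} guarantees the line search is well defined. Finally, the invariant holds at the end of the iteration, so $X^{k+1}=\hat{X}^k$ is a set of mutually nondominated solutions, which also furnishes the inductive hypothesis for iteration $k+1$. I expect the main obstacle to be the bookkeeping around the steepest-descent insertion: one must track that $\hat{X}^k$ evolves throughout the nested loops and argue preservation at each individual modification in the order executed, and the transitivity argument tying $z_c^k$ back to the still-present $x_c$ is the key technical point that the earlier propositions do not directly supply.
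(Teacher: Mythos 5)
Your proof is correct and takes essentially the same route as the paper's: it maintains the mutual-nondominance invariant of $\hat{X}^k$ through the only two update steps, using the Armijo sufficient-decrease condition (hence domination of $x_c$ and transitivity) at step \ref{step:add_zk} and the defining condition of $\alpha_c^I$ at step \ref{step:insert_partial}, then concludes via Proposition \ref{prop:well_def_2} and the assignment $X^{k+1}=\hat{X}^k$. The only difference is one of detail: you make explicit the set-update lemma and the transitivity argument that the paper's terser proof leaves implicit.
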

\begin{proof}
	At iteration $k$, the set $\hat{X}^k$ is initialized with the nondominated points $X^k$; then, it is only updated at steps \ref{step:add_zk} and \ref{step:insert_partial}. At step \ref{step:add_zk}, either $z_c^k=x_c$, and the set is not modified, or, by the definition of $\alpha_c^k$, $z_c^k$ dominates $x_c$,  which in turn was nondominated. Thus, the added point $z_c^k$ is nondominated, while all the newly dominated points are removed. At step \ref{step:insert_partial}, the added point $z_c^k+\alpha_c^Iv^I(z_c^k)$ is nondominated by the definition of $\alpha_c^I$; all the newly dominated points are removed. Thus, $\hat{X}^k$ always contains mutually nondominated solutions. By Proposition \ref{prop:well_def_2} step \ref{step:second_ls} is therefore always well defined. Moreover, since $X^{k+1}=\hat{X}^k$ at the end of the iteration, $X^{k+1}$ inherits the nondominance property from $\hat{X}^k$.
\end{proof}

\begin{lemma}
	\label{lemma:lemma}
	After step \ref{step:add_zk} of Algorithm \ref{alg::improved_FSD}, $z^k_c$ belongs to $\hat{X}^k$. Moreover, for all $\tilde{k}>k$, there exists $y\in X^{\tilde{k}}$ such that $F(y)\le F(z_c^k)$.
\end{lemma}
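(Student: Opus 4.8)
The plan is to dispatch the two assertions separately: the first is essentially a matter of unwinding the set operation, while the second calls for an induction over the subsequent updates of the working set.

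For the first claim, I would simply observe that step \ref{step:add_zk} reassigns $\hat{X}^k$ to $\left(\hat{X}^k \setminus \{y \mid F(z_c^k) \lneqq F(y)\}\right) \cup \{z_c^k\}$. The deletion set $\{y \mid F(z_c^k) \lneqq F(y)\}$ can never contain $z_c^k$ itself, since $F(z_c^k) \lneqq F(z_c^k)$ is false (no point strictly dominates itself); hence the explicit union with $\{z_c^k\}$ guarantees $z_c^k \in \hat{X}^k$ right after the step. So Part 1 is immediate from the form of the update. Note that the identical reasoning shows that \emph{any} point inserted via an update of this shape is retained by that update.

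For the second claim, I would establish the stronger invariant that, from the execution of step \ref{step:add_zk} onward, the current working set always contains a point $y$ with $F(y)\le F(z_c^k)$; in particular, at the end of each iteration $\tilde{k}\ge k$, where $X^{\tilde{k}+1}=\hat{X}^{\tilde{k}}$, such a $y$ exists. The base case is Part 1 itself: right after step \ref{step:add_zk}, $z_c^k$ is in the set and $F(z_c^k)\le F(z_c^k)$ holds trivially. For the inductive step, I would use that the working set is touched only at steps \ref{step:add_zk} and \ref{step:insert_partial} (the assignments $\hat{X}^k=X^k$ and $X^{k+1}=\hat{X}^k$ merely copy it), and each such modification deletes exactly the points dominated by a freshly inserted point $q$ and then inserts $q$. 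Assume the invariant holds before one such modification, witnessed by some $y$ with $F(y)\le F(z_c^k)$. If $y$ survives the deletion, the invariant persists via $y$. If $y$ is deleted, then by the deletion rule $F(q)\lneqq F(y)$, and combining with $F(y)\le F(z_c^k)$ gives $F(q)\le F(z_c^k)$ by transitivity of the ordering; since $q$ is retained (by the remark closing the previous paragraph), the invariant persists via $q$. Chaining this across all updates up to the end of any iteration, and passing the invariant from one iteration to the next through $X^{\tilde{k}+1}=\hat{X}^{\tilde{k}}$, yields a point $y\in X^{\tilde{k}}$ with $F(y)\le F(z_c^k)$ for every $\tilde{k}>k$.

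The only delicate point I anticipate is bookkeeping the induction cleanly: within a single iteration the set is mutated many times (once per $x_c$ at step \ref{step:add_zk} and once per admissible $I$ at step \ref{step:insert_partial}), so the induction is really over the \emph{sequence of set updates} rather than over iterations alone. Once that indexing is fixed, the heart of each step is just transitivity of $\le$ together with the fact that every update discards only dominated points in favour of a dominating one; the nondominance of inserted points, which is what guarantees they are actually kept, is already supplied by Proposition \ref{prop:nondom}.
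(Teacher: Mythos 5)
Your proof is correct and follows essentially the same route as the paper: the first claim is read off directly from the form of the update at step \ref{step:add_zk}, and the second is the paper's recursive chain of dominating points ($z_c^k, y^1, y^2, \ldots, y^t$) recast as an invariant maintained across the sequence of set updates, using transitivity of $\le$ and the fact that points are deleted only when a dominating point is inserted. One small note: your closing appeal to Proposition \ref{prop:nondom} is unnecessary — retention of each inserted point is guaranteed purely by the union in the update rule, as you yourself observed in your first paragraph.
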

\begin{proof}
	The first assertion of the proposition trivially follows from the update rule of $\hat{X}^k$, at step \ref{step:add_zk}. Now, either $z^{k}_c\in X^{\tilde{k}}$ or $z_c^k\notin X^{\tilde{k}}$; in the former case, we trivially have $y=z_c^k$; otherwise, we can notice that, by the instructions of the algorithm, any set $X^{\tilde{k}}$, $\tilde{k}>k$, is the result of repeated application of steps \ref{step:add_zk} and \ref{step:insert_partial}, starting from $\hat{X}^k$ at some point when $z_c^k\in \hat{X}^k$. When $z_c^k$ was removed from the set, a point $y^1$ was certainly inserted such that $F(y^1)\le F(z_c^k)$. Then, either $y^1\in X^{\tilde{k}}$, or $y^1$ was removed when a point $y^2$ such that $F(y^2)\le F(y^1)$ was added. By recursively applying the reasoning, we have that there is certainly a point $y^t\in X^{\tilde{k}}$ such that $F(y^t)\le F(y^{t-1})\le\ldots \le F(y^2)\le F(y^1)\le F(z_c^k)$. This completes the proof. 
\end{proof}

\begin{proposition}
	Let $X^0$ be a set of mutually nondominated points and $x_0\in X^0$ be a point such that the set $\mathcal{L}(x_0) = \bigcup_{j=1}^{m}\{x\in\mathbb{R}^n\mid f_j(x)\le f_j(x_0)\}$ is compact.
	Let $\{X^k\}$ be the sequence of sets of nondominated points produced by Algorithm \ref{alg::improved_FSD}. Let $\{x_{j_k}\}$ be a linked sequence, then it admits limit points and every limit point is Pareto-stationary for problem \eqref{eq:mo_prob}.
\end{proposition}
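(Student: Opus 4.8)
The plan is to establish the two claims separately: boundedness of the linked sequence (hence existence of limit points) through a domination argument rooted in Lemma \ref{lemma:lemma}, and Pareto stationarity of every limit point through the preliminary common-descent step, arguing by contradiction.

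For the first claim, I would show that $X^k \subseteq \mathcal{L}(x_0)$ for every $k$. Since $x_0 \in X^0$, the recursive domination reasoning underlying Lemma \ref{lemma:lemma} yields, for each $k$, a point $y_k \in X^k$ with $F(y_k) \le F(x_0)$. For an arbitrary $w \in X^k$, mutual nondominance forbids $y_k$ from dominating $w$, so either $F(w) = F(y_k) \le F(x_0)$, or $f_j(w) < f_j(y_k) \le f_j(x_0)$ for some $j$; in both cases $w \in \mathcal{L}(x_0)$. As $\mathcal{L}(x_0)$ is compact, the linked sequence $\{x_{j_k}\}$ lies in a compact set and therefore admits limit points.

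For stationarity, let $\bar{x}$ be a limit point with $x_{j_k} \to \bar{x}$ over an index set $K$, and suppose for contradiction that $\theta(\bar{x}) < 0$. Each such $x_{j_k}$ generates its successor, hence is processed while still nondominated, so—by continuity of $\theta$ and the fact that $\theta(x_{j_k}) < \theta(\bar{x})/2 =: -\eta < 0$ for $k \in K$ large—the common-descent step at step \ref{step:line_search} is executed, giving $z_{j_k}^k = x_{j_k} + \alpha_{j_k}^k v(x_{j_k})$ with $F(z_{j_k}^k) \le F(x_{j_k}) + \boldsymbol{1}\gamma\alpha_{j_k}^k\theta(x_{j_k})$. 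I would then split on the behaviour of $\{\alpha_{j_k}^k\}_{k\in K}$. If $\liminf_{k\in K}\alpha_{j_k}^k = 0$, then along a subsequence the trial step $\alpha_{j_k}^k/\delta$ eventually fails the acceptance test, so some fixed objective $i$ (extracted by pigeonhole) satisfies $f_i(x_{j_k} + \tfrac{\alpha_{j_k}^k}{\delta}v(x_{j_k})) > f_i(x_{j_k}) + \gamma\tfrac{\alpha_{j_k}^k}{\delta}\theta(x_{j_k})$; dividing by $\alpha_{j_k}^k/\delta \to 0$ and invoking the mean value theorem together with continuity of $v$ and $\nabla f_i$ gives $\nabla f_i(\bar{x})^Tv(\bar{x}) \ge \gamma\theta(\bar{x})$, which contradicts $\nabla f_i(\bar{x})^Tv(\bar{x}) \le \theta(\bar{x})$ (a direct consequence of the definition of $v(\bar{x}),\theta(\bar{x})$) since $\gamma\in(0,1)$ and $\theta(\bar{x})<0$.

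The main obstacle is the complementary case $\liminf_{k\in K}\alpha_{j_k}^k = \bar\alpha > 0$: because a linked sequence may pass through partial-descent updates (step \ref{step:insert_partial}) that raise some objectives, the values $F(x_{j_k})$ need not decrease monotonically and a direct telescoping of the sufficient decrease is unavailable. I would circumvent this via persistence of domination. Fixing $k_0 \in K$ large with $\alpha_{j_{k_0}}^{k_0} \ge \bar\alpha/2$, the sufficient-decrease inequality together with $x_{j_{k_0}} \to \bar{x}$ yields $F(z_{j_{k_0}}^{k_0}) \le F(\bar{x}) - \tfrac{c}{2}\boldsymbol{1}$ for a constant $c = \gamma\eta\bar\alpha/2 > 0$. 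By Lemma \ref{lemma:lemma}, every later set $X^{\tilde{k}}$ with $\tilde{k}>k_0$ then contains a point $y^{\tilde{k}}$ with $F(y^{\tilde{k}}) \le F(\bar{x}) - \tfrac{c}{2}\boldsymbol{1}$. Selecting $\tilde{k}\in K$ beyond $k_0$ with $x_{j_{\tilde{k}}}$ close enough to $\bar{x}$ that $F(x_{j_{\tilde{k}}}) \ge F(\bar{x}) - \tfrac{c}{4}\boldsymbol{1}$ forces $F(y^{\tilde{k}}) \lneqq F(x_{j_{\tilde{k}}})$, so $y^{\tilde{k}}$ dominates $x_{j_{\tilde{k}}}$, contradicting the mutual nondominance of $X^{\tilde{k}}$. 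Hence $\theta(\bar{x}) = 0$, i.e.\ $\bar{x}$ is Pareto stationary.
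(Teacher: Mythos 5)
Your proof is correct, and it rests on the same pillars as the paper's: the compactness part is the identical domination argument (you in fact prove the slightly stronger inclusion $X^k\subseteq\mathcal{L}(x_0)$, where the paper only needs it for the linked sequence itself), and the stationarity part combines the sufficient decrease of the preliminary common-descent step, Lemma \ref{lemma:lemma} together with mutual nondominance (Proposition \ref{prop:nondom}), and the mechanics of Armijo rejection. The organization, however, genuinely differs. The paper extracts a single subsequence along which $\alpha_{j_k}\to\bar{\alpha}$ and $z_{j_k}\to\bar{z}=\bar{x}+\bar{\alpha}v(\bar{x})$, uses Lemma \ref{lemma:lemma} and nondominance in the limit to obtain $f_h(\bar{x})\le f_h(\bar{z})\le f_h(\bar{x})-\gamma\bar{\alpha}\varepsilon$, concludes $\bar{\alpha}=0$, and then reaches the final contradiction by showing that the Armijo condition fails at every fixed step $\alpha_0\delta^q$, which is absurd by \cite[Lemma 4]{fliege2000steepest}. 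You instead split on $\liminf_{k\in K}\alpha_{j_k}$: when it is positive you avoid taking limits of the $z$ points entirely, propagating an explicit decrease constant forward through Lemma \ref{lemma:lemma} to exhibit a point of $X^{\tilde{k}}$ that strictly dominates a later iterate of the linked sequence, contradicting Proposition \ref{prop:nondom}; when it vanishes you replace the citation of \cite[Lemma 4]{fliege2000steepest} with a self-contained mean value theorem argument yielding $\nabla f_i(\bar{x})^T v(\bar{x})\ge\gamma\theta(\bar{x})>\theta(\bar{x})\ge\nabla f_i(\bar{x})^T v(\bar{x})$, which is impossible. Your two cases are logically equivalent to the paper's ``force $\bar{\alpha}=0$, then contradict'' scheme, so nothing essential changes; what your version buys is self-containedness (explicit constants, MVT instead of an external lemma, no need to construct $\bar{z}$), while the paper's version is more compact because it reuses the cited lemma and handles both regimes within one subsequence argument.
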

\begin{proof}
	For any $k$, either $x_0\in X^k$ or $x_0\notin X^k$. In the former case, since all points in $X^k$ are mutually nondominated, we certainly have $x_{j_k}\in \mathcal{L}(x_0)$. Otherwise, by a similar reasoning as in the proof of Lemma \ref{lemma:lemma}, we have that there is a point $y_k\in X^k$ such that $F(y_k)\le F(x_0)$; since $y_k$ does not dominate $x_{j_k}$, we have that there exists $h \in \left\{1,\ldots, m\right\}$ such that $f_h(x_{j_k})\le f_h(y_k)\le f_h(x_0)$; thus, again, $x_{j_k}\in\mathcal{L}(x_0)$. Therefore the entire sequence $\{x_{j_k}\}$ belongs to the compact set $\mathcal{L}(x_0)$, and thus admits limit points. 
	
	Now, let us consider a limit point $\bar{x}$ of a linked sequence $\{x_{j_k}\}$, i.e., there exists $K\subseteq\{1,2,\ldots\}$ such that
	$$\lim_{\substack{k\to\infty\\k\in K}}x_{j_k} = \bar{x}.$$
	We assume by contradiction that $\theta(\bar{x})<0$ and thus there exists $\varepsilon>0$ such that for all $k\in K$ sufficiently large we have $\theta(x_{j_k})\le -\varepsilon<0$.
	Let $z_{j_k} = x_{j_k}+\alpha_{j_k}v(x_{j_k})$ the point obtained at step \ref{step:z} of the algorithm starting from $x_{j_k}$.
	Now, $\alpha_{j_k}\in[0,\alpha_0]$, which is a compact set, thus there exists a further subsequence $K_1\subseteq K$ such that $\alpha_{j_k}\to \bar{\alpha}\in[0,\alpha_0]$. Moreover, function $v(\cdot)$ is continuous, thus $v(x_{j_k})\to v(\bar{x})$ for $k\to \infty$, $k\in K_1$. Hence, taking the limits along $K_1$ we also get that $z_{j_k}\to \bar{x} + \bar{\alpha}v(\bar{x}) = \bar{z}$.
	
	By the definition of $\alpha_{j_k}$ and $z_{j_k}$ (steps \ref{step:line_search}-\ref{step:z}) we have that 
	$$F(z_{j_k})\le F(x_{j_k}) + \boldsymbol{1}\gamma \alpha_{j_k} \theta(x_{j_k}).$$
	Taking the limits for $k\in K_1$, $k\to \infty$, recalling the continuity of $\theta(\cdot)$, we get
	\begin{equation}
		\label{eq:main_1}
		F(\bar{z}) \le F(\bar{x}) + \boldsymbol{1}\gamma \bar{\alpha} \theta(\bar{x})\le F(\bar{x})-\boldsymbol{1}\gamma \bar{\alpha}\varepsilon.
	\end{equation}
	Now, given $k\in K_1$, let $k_1(k)$ be the smallest index in $K_1$ such that $k_1(k)>k$. By Lemma \ref{lemma:lemma}, there exists $y_{j_{k_1(k)}}\in X^{k_1(k)}$ such that $F(y_{j_{k_1(k)}})\le F(z_{j_k})$; moreover, $x_{j_{k_1(k)}}\in X^{k_1(k)}$; by Proposition \ref{prop:nondom}, the points in $X^{k_1(k)}$ are mutually nondominated, hence there exists $h(k)\in\{1,\ldots,m\}$ such that
	$$f_{h(k)}(x_{j_{k_1(k)}})\le f_{h(k)}(y_{j_{k_1(k)}})\le f_{h(k)}(z_{j_k}).$$
	Considering a further subsequence $K_2\subseteq K_1$ such that $h(k) = h$ for all $k\in K_2$ and taking the limits, we obtain
	$$f_h(\bar{x})\le f_h(\bar{z}).$$
	Putting this last result together with \eqref{eq:main_1}, we get
	$$f_h(\bar{x})\le f_h(\bar{z})\le f_h(\bar{x})-\gamma\bar{\alpha}\varepsilon.$$
	Since $\bar{\alpha}\in[0,\alpha_0]$, $\varepsilon>0$ and $\gamma>0$, the above chain of inequalities can only hold if $\bar{\alpha}=\lim_{k\to \infty,k\in K_2}\alpha_{j_k}=0$. For all $k\in K_2$ sufficiently large, we have $\theta(x_{j_k})<0$ and, thus, $\alpha_{j_k}$ is defined at step \ref{step:line_search}. Since $\alpha_{j_k}\to 0$, for any $q\in\mathbb{N}$, for all $k\in K_2$ large enough we certainly have $\alpha_{j_k}< \alpha_0\delta^q$; thus, the Armijo condition $ F(x_{j_k}+\alpha v(x_{j_k}))\le F(x_{j_k})+\boldsymbol{1}\gamma \alpha \theta(x_{j_k})$ is not satisfied by $\alpha= \alpha_0\delta^q$, i.e., there exists $\tilde{h}(k)$ such that 
	$$f_{\tilde{h}(k)}(x_{j_k}+\alpha_0\delta^q v(x_{j_k})) > f_{\tilde{h}(k)}(x_{j_k})+\gamma \alpha_0\delta^q \theta(x_{j_k}).$$
	Taking the limits along a suitable subsequence such that $\tilde{h}(k)= \tilde{h}$, we get
	$$f_{\tilde{h}}(\bar{x}+\alpha_0\delta^qv(\bar{x}))\ge f_{\tilde{h}}(\bar{x})+\gamma \alpha_0\delta^q\theta(\bar{x}).$$
	Now, since $q$ is arbitrary and $\theta(\bar{x})<0$, this is absurd by \cite[Lemma 4]{fliege2000steepest}. The proof is thus complete.    
\end{proof}

\begin{figure*}[th]
	\centering
	\subfloat[\label{fig::FIG_IFSD-a}]{\includegraphics[width=0.35\textwidth]{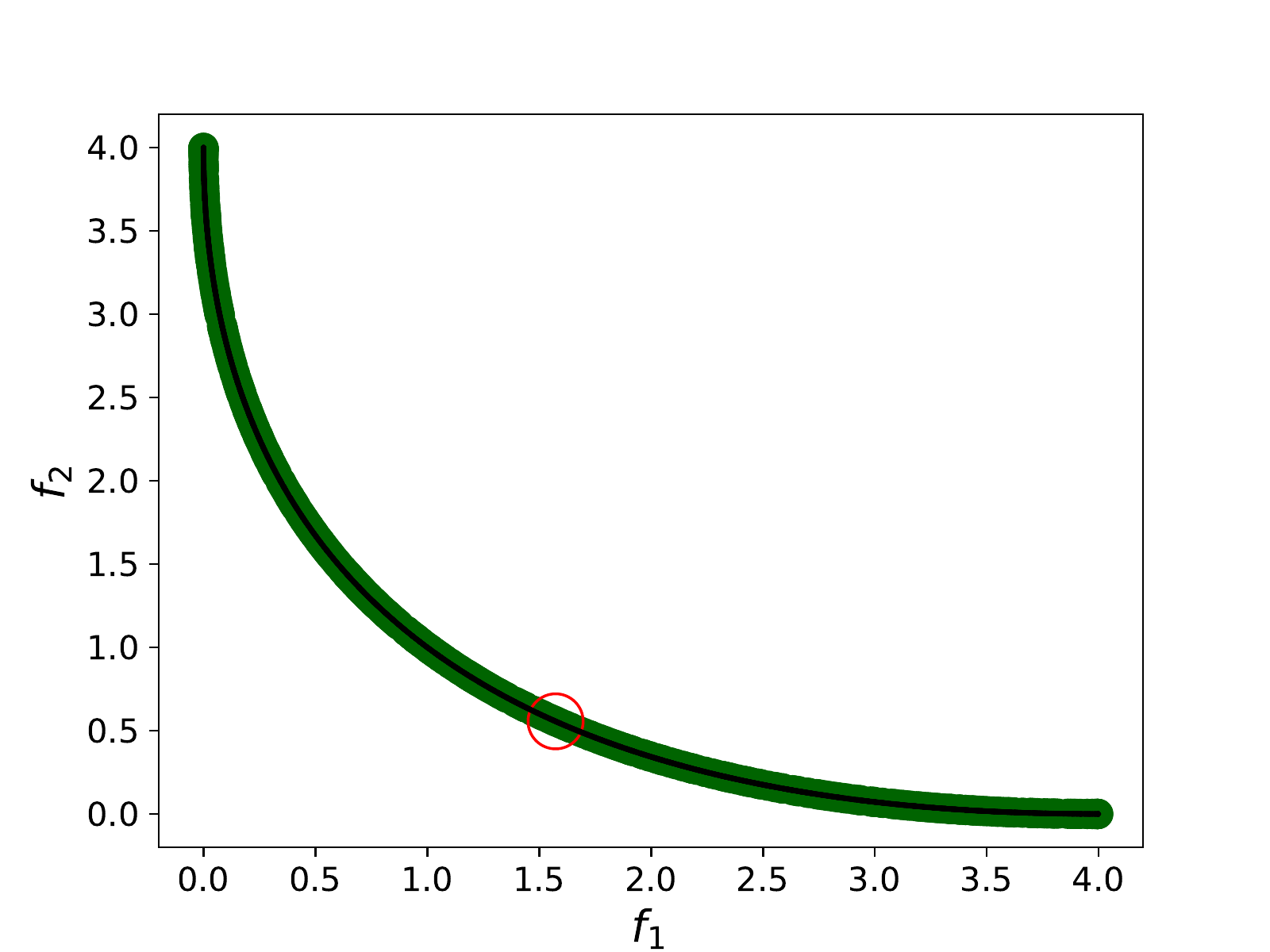}}
	\hfil
	\subfloat[\label{fig::FIG_IFSD-b}]{\includegraphics[width=0.35\textwidth]{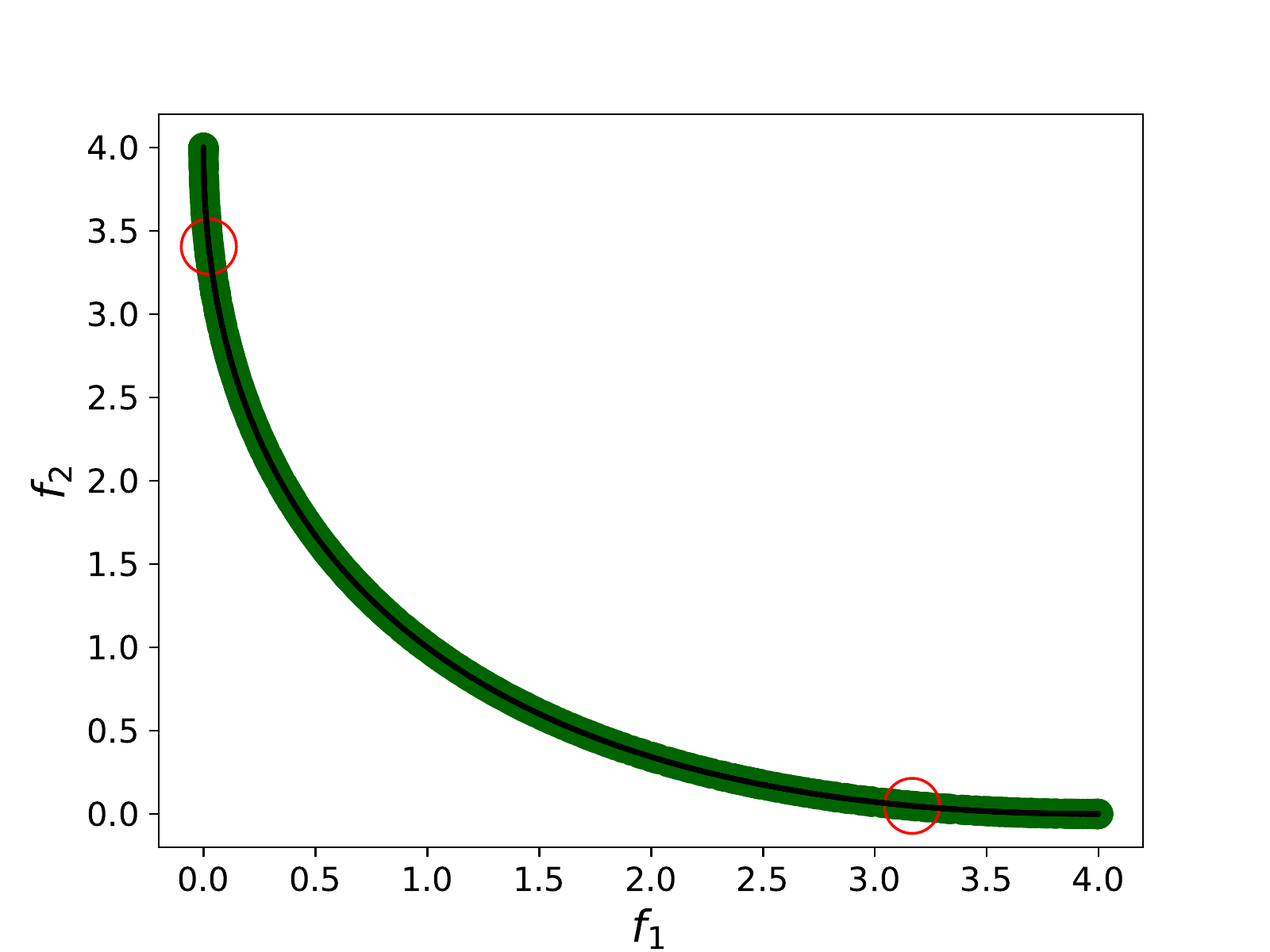}}
	\hfil
	\subfloat[\label{fig::FIG_IFSD-c}]{\includegraphics[width=0.35\textwidth]{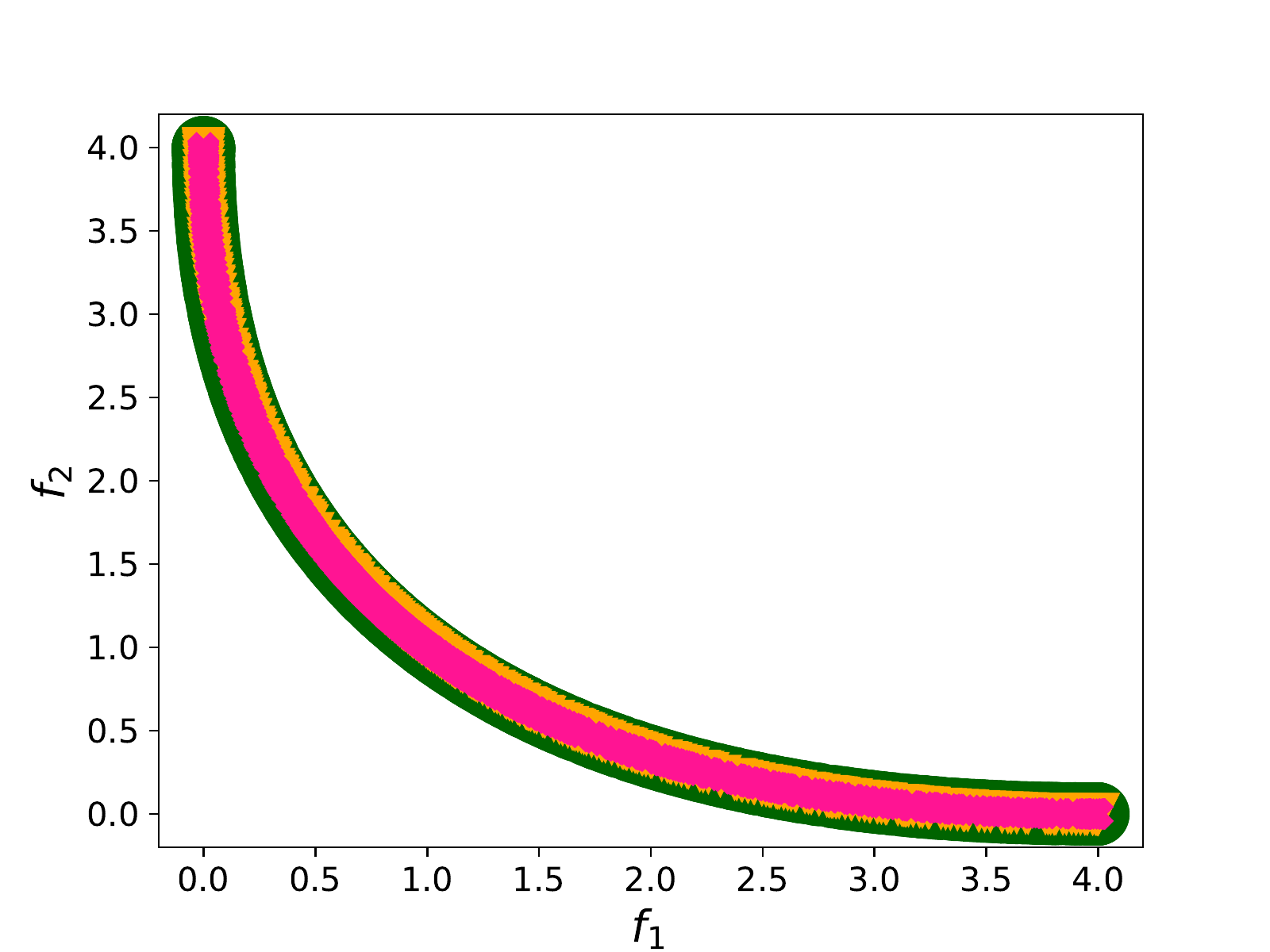}}
	\caption{Pareto fronts obtained by the \texttt{IFSD} algorithm on the convex JOS problem ($n=5$) starting from different initial points: (a) 1 Pareto point as in Figure \ref{fig::FIG_FSDA-a}; (b) 2 Pareto points as in Figure \ref{fig::FIG_FSDA-b}; (c) 3 independent runs from the same random points as those of Figure \ref{fig::FIG_FSDA}(c)-(d).}
	\label{fig::FIG_IFSD}
\end{figure*}

\begin{figure*}
	\centering
	\subfloat[\label{fig:PP-purity} Purity profile]{\includegraphics[width=0.3\textwidth]{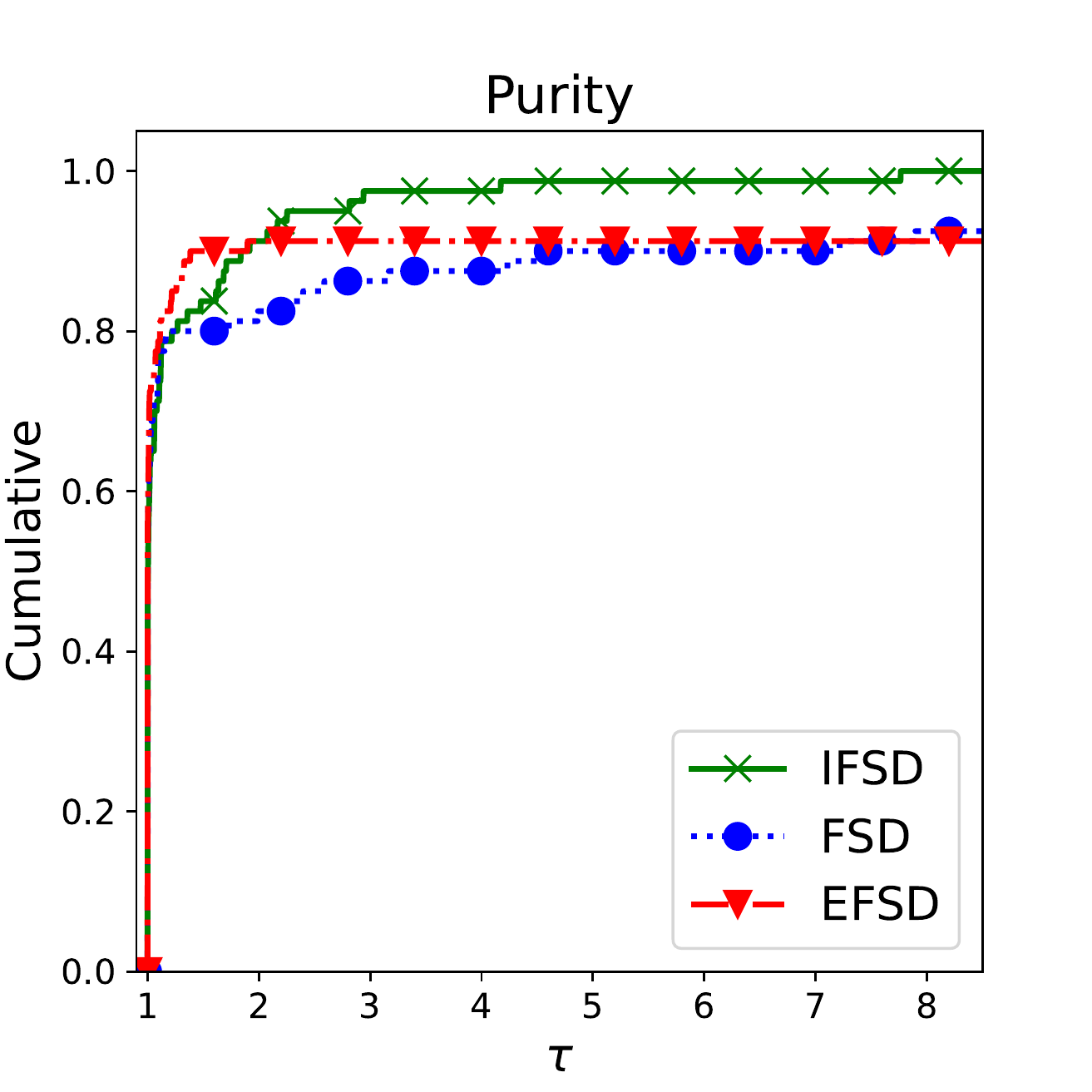}}
	\hfil
	\subfloat[\label{fig:PP-hv} Hypervolume profile]{\includegraphics[width=0.3\textwidth]{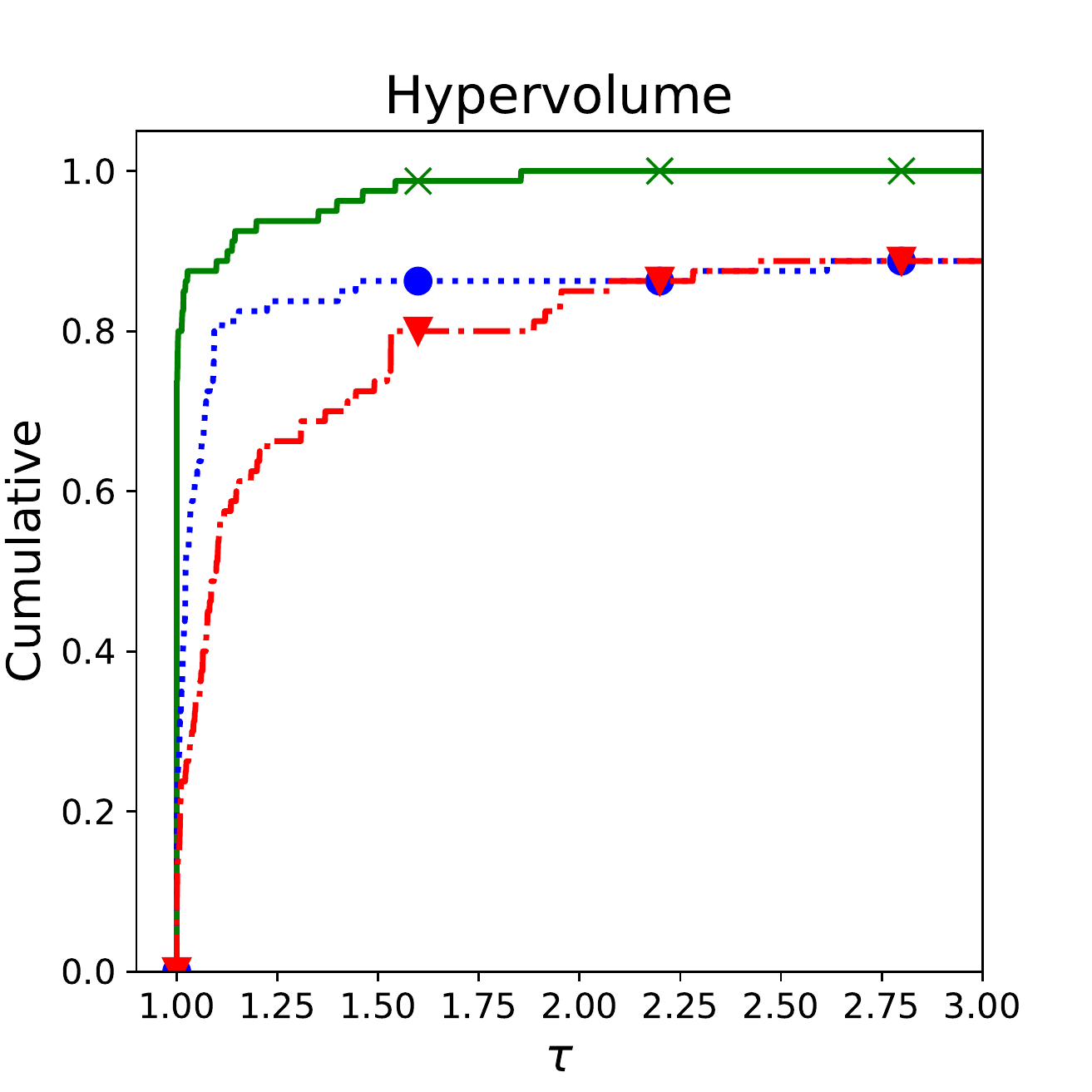}}
	\\
	\subfloat[\label{fig:PP-gamma} $\Gamma$-spread profile ]{\includegraphics[width=0.3\textwidth]{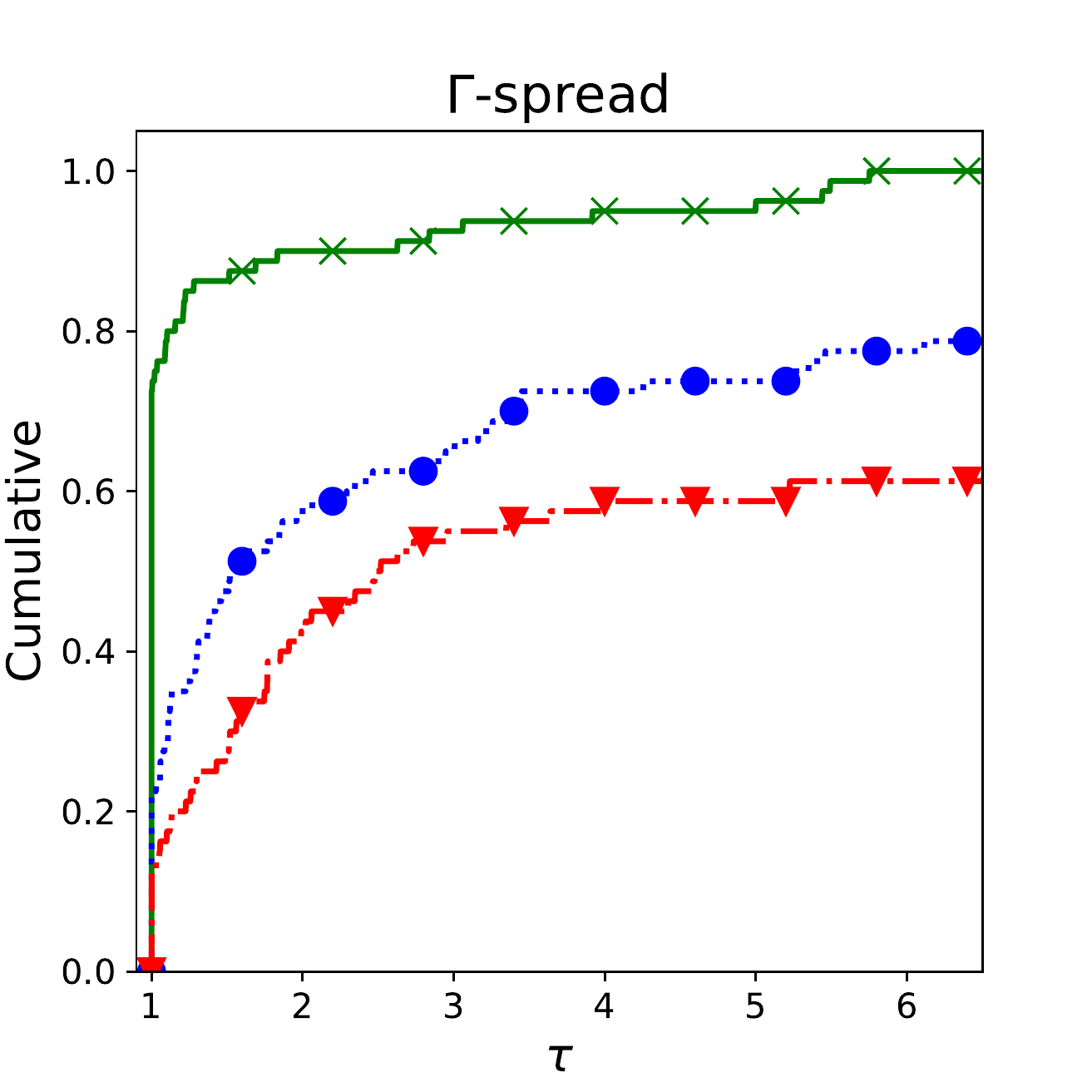}}
	\hfil
	\subfloat[\label{fig:PP-delta} $\Delta$-spread profile]{\includegraphics[width=0.3\textwidth]{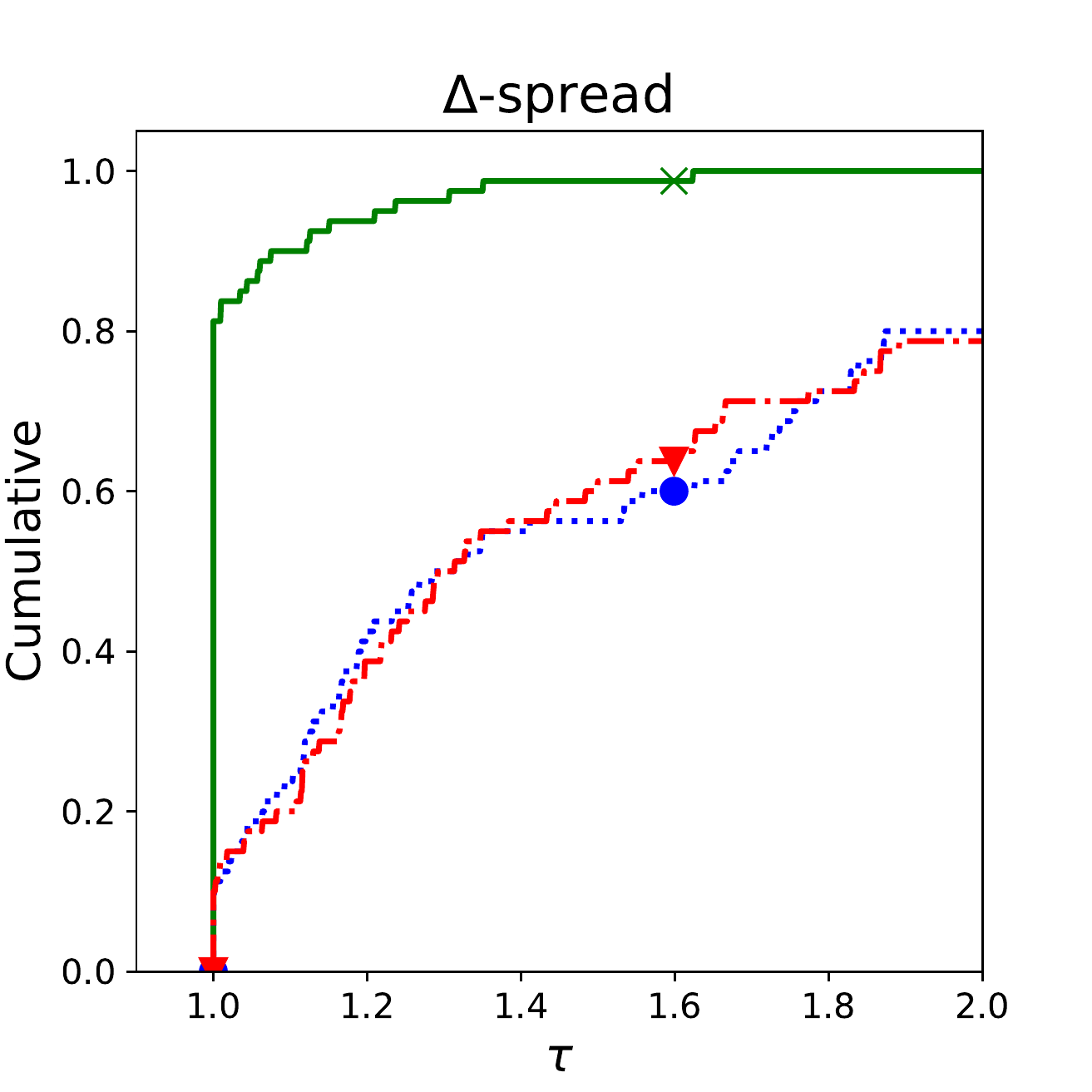}}
	\caption{Performance profiles for the \texttt{IFSD}, \texttt{FSD} and \texttt{EFSD} algorithms on a benchmark of 80 multi-objective problems.}
	\label{fig::PP}
\end{figure*}

\section{Numerical results}
\label{sec:exp}

In this section, we show the results of computational experiments, supporting the discussion in Sections \ref{sec:steepest}-\ref{sec:improved_fsd}. The code, which was written in Python3, was executed on a computer with the following characteristics: Ubuntu 22.04, Intel Xeon Processor E5-2430 v2 6 cores 2.50 GHz, 16 GB RAM. In order to solve instances of problems \eqref{eq::steepest_common}-\eqref{eq::steepest_partial}, the Gurobi optimizer (version 9.5) was employed.

We compared our approach (\texttt{IFSD}) to the original \texttt{FSD}, Algorithm \ref{alg::FSD},  equipped with the base line search (Algorithm \ref{alg::als}) or the extrapolation strategy (\texttt{EFSD}). The following parameters setting was used for line searches: $\alpha_0 = 1, \delta=0.5, \gamma = 10^{-4}$. 

With respect to the conceptual scheme in Algorithm \ref{alg::improved_FSD}, we employed within \texttt{IFSD} a strategy to limit the number of points used for partial descent searches, in order to improve the efficiency of the overall procedure and avoid the production of too many, very close solutions. In particular, we added a condition based on the crowding distance \cite{deb2002fast} to decide whether a point should be considered for further exploration after the steepest descent step or not.  

The benchmark used for the comparisons consists of the following unconstrained problems: CEC09\_2, CEC09\_3 \cite{zhang_multiobjective_2009}, JOS\_1 \cite{jin2001dynamic}, MAN \cite{lapucci2022memetic} ($m=2$) and CEC09\_10 ($m=3$) \cite{zhang_multiobjective_2009}. 
For all the problems, we considered instances with values of $n$ in $\{5,10,20,30,40,50,100,200\}$. Moreover, each problem was tested twice, with different strategies for the initial points: a) $n$ points are uniformly sampled from the hyper-diagonal of a suitable box; b) only the midpoint of the hyper-diagonal is selected. The hyper-diagonal refers to the box constituting the constraints in the bounded version of CEC and MAN problems, whereas it is $[-100, 100]^n$ for the JOS problem.

In order to appreciate the relative performance and robustness of the approaches, we employed the \textit{performance profiles} \cite{dolan2002benchmarking}. In brief, this tool shows the probability that a metric value achieved by a method in a problem is within a factor $\tau \in \mathbb{R}$ of the best value obtained by any of the algorithms in that problem. We employed classical metrics for multi-objective optimization: \textit{purity}, \textit{$\Gamma$--spread}, \textit{$\Delta$--spread} \cite{custodio11} and \textit{hyper-volume} \cite{zitzler98}. 
Purity and hyper-volume have increasing values for better solutions: then, the corresponding profiles are produced considering the inverse of the obtained values.

In Figure \ref{fig::FIG_IFSD}, the behavior of the proposed approach in the same setting as in Figure \ref{fig::FIG_FSDA} is shown. In this example we can observe that now,  regardless, of the starting point(s), the entire Pareto front is effectively spanned, with not even tiny holes.  

For a more consistent assessment of algorithms performance, we report in Figure \ref{fig::PP} the performance profiles for the \texttt{IFSD}, \texttt{FSD} and \texttt{EFSD} algorithms on the entire benchmark of 80 problem instances. 

We observe a remarkable superiority of the proposed approach w.r.t.\ the original variants of the algorithm, especially in terms of the spread metrics, which points out that the Pareto front is indeed spanned more widely and uniformly. The strong hypervolume performance also supports this result. As for purity metric, the three algorithms appear to be closer, but we still observe a slight advantage of \texttt{IFSD}.

\section{Conclusions}
\label{sec:conclusions}
In this paper, we introduced an improved Front Steepest Descent algorithm with asymptotic convergence guarantees similar as those of the original method. The novel algorithm is designed so as to overcome some empirically evident limitation of \texttt{FSD}, that is often unable to span large portions of the Pareto front. Numerical evidence suggests that the proposed procedure effectively achieves this goal.

Future work should be focused on the integration of the proposed approach and the extrapolation strategy proposed in \cite{cocchi2020convergence}. Moreover, the employment of the proposed approach within memetic procedures for global multi-objective optimization \cite{lapucci2022memetic} might be considered. Finally, the algorithm defined in this work could be extended to deal with constrained optimization problems. 
\section*{Conflict of interest}
The authors declare that they have no conflict of interest.

\bibliographystyle{abbrv}
\bibliography{bibliography.bib}

\end{document}